\newtheorem{theorem}{Theorem}[section]
\newtheorem{lemma}[theorem]{Lemma}
\newtheorem{remark}[theorem]{Remark}
\numberwithin{equation}{section}
\title{Existence of a nontrival solution for Dirichlet problem involving $p(x)$-Laplacian}
\author{\small
  \small Sylwia Barna\'s\\
  \small email: Sylwia.Barnas@im.uj.edu.pl\\
\\
  \small Cracow University of Technology\\
  \small Institute of Mathematics\\
  \small ul. Warszawska 24,\
  \small 31-155 Krak\'ow, Poland\\
\\
  \small Jagiellonian University\\
  \small Faculty of Mathematics and Computer Science\\
  \small ul. {\L}ojasiewicza 6,\
  \small 30-348 Krak\'ow, Poland}
\date{}
\begin{document}

\maketitle
\bibliographystyle{plain}

\noindent \textbf{Abstract:}
  In this paper we study the nonlinear Dirichlet problem involving
  $p(x)$-Laplacian (hemivariational inequality) with nonsmooth potential. By using nonsmooth critical point theory for locally Lipschitz functionals due to Chang \cite{changg} and the properties of variational Sobolev spaces, we establish conditions which ensure the existence of solution for our problem.

\noindent \textbf{Keywords:}
  $p(x)$-Laplacian, hemivariational inequality, Palais-Smale condition, mountain pass theorem, variable exponent Sobolev space.

\section{Introduction} \label{Intr}
Let $\Omega\subseteq\mathbb{R}^N$ (where $N>2$) be a bounded domain with a $\mathcal{C}^2$-boundary $\partial \Omega$.
In this paper we study the following nonlinear elliptic differential inequality with $p(x)$-Laplacian

\begin{equation}\label{eq_01}
  \left\{
  \begin{array}{lr}
    -\Delta_{p(x)}u-\lambda |u(x)|^{p(x)-2} u(x) \in \partial j(x, u(x))& \textrm{a.e. on  } \Omega,\\
    u=0 & \textrm{on}\ \partial \Omega,
  \end{array}
  \right.
\end{equation}
  where $p:\overline{\Omega} \rightarrow \mathbb{R}$ is a continuous function satisfying
\begin{equation} \label{pp}
  1<p^- \leqslant p(x) \leqslant p^+<N<\infty \textrm{ and } p^+\leqslant \hat{p}^*:=\frac{Np^-}{N-p^-}
\end{equation}
  where $p^-:=\inf\limits_{x \in \Omega} p(x)$ and  $p^+:=\sup\limits_{x \in \Omega} p(x)$. The operator 
\[
\Delta_{p(x)}u:= \textrm{div} \big( |\nabla u(x)|^{p(x)-2} \nabla u(x) \big)
\]
  is the so-called $p(x)$-Laplacian, which for $p(x)\equiv p$ becomes $p$-Laplacian.
  Problems with $\Delta_{p(x)}u$ are more complicated than with $\Delta_{p}u$
  because they are usually inhomogeneous and possesses "more nonlinearity". The function
$j(x,t)$ is locally Lipschitz
  in the $t$-variable
  and measurable in $x$-variable and by $\partial j(x,t)$ we denote the subdifferential with respect
  to the $t$-variable in the sense of Clarke \cite{Clarke}.

In problem (\ref{eq_01}) appears $\lambda$, for which we will assume that
  \begin{equation} \label{lamb2}
\lambda < \frac{p^-}{p^+} \lambda_* \quad \textrm{ and } \quad \lambda<\frac{(p^- -1)p^+}{(p^+-1)p^-}\lambda_*,
  \end{equation} 
 where $\lambda_*$
 is defined by
\begin{equation}\label{lambd}
  \lambda_*=\inf_{u \in W_0^{1,p(x)}(\Omega)\backslash \{0\}}
  \frac{\int_{\Omega}|\nabla u(x)|^{p(x)}dx}{\int_{\Omega}|u(x)|^{p(x)}dx}.
\end{equation}
 When we consider 
\[
\widetilde{p}:=\min \Big\{\frac{(p^- -1)p^+}{(p^+-1)p^-}, \frac{p^-}{p^+} \Big\},
\]
then 
\begin{equation} \label{dodatkowy}
\lambda<\widetilde{p} \lambda_*.
\end{equation}

 It may happen that $\lambda_*=0$ (see Fan-Zhang \cite{fanzhang}).\\


  Recently, the study of $p(x)$-Laplacian problems has attracted more and more attention. The hemivariational inequalities with Dirichlet or Neumann boundary condition have been considered by many authors. In particular in Ge-Xue \cite{ge} and  Qian-Shen \cite{qian1}, the following differential inclusion involving $p(x)$-Laplacian is studied

\[\label{eq_02}
  \left\{
  \begin{array}{lr}
    -\Delta_{p(x)}u \in \partial j(x, u(x))& \textrm{a.e. on  } \Omega,\\
    u=0 & \textrm{on}\ \partial \Omega,
  \end{array}
  \right.
\]
where $p$ is a continuous function satisfying (\ref{pp}). In the last paper the existence of two solutions of constant sign is proved. 

Also the study of variational problems is an interesing topic in recent years. For example in Fan-Zhang \cite{zhang} some sufficient conditions for the existence of solutions for the Dirichlet problem with $p(x)$-Laplacian is presented. Also in Ji \cite{ji}, the existence of three solutions for a differential equation is proved.

Finally we have papers with differential inclusions involving $p(x)$-Laplacian of the following type
\[\label{eq_03}
  \left\{
  \begin{array}{lr}
    -\Delta_{p(x)}u+\lambda |u(x)|^{p(x)-2} u(x) \in \partial j(x, u(x))& \textrm{a.e. on  } \Omega,\\
    u=0 & \textrm{on}\ \partial \Omega,
  \end{array}
  \right.
\]
where $\lambda >0$. 
In Ge-Xue-Zhou \cite{ge2}, authors proved sufficient conditions to obtain radial solutions for
  differential inclusions with $p(x)$-Laplacian. Differential inclusion with Neumann boundary condition were studied in
  Qian-Shen-Yang \cite{qian}
  and Dai \cite{dai}. The authors considered an inclusion involving a weighted function
  which is indefinite. In Dai \cite{dai}, the existence of infinitely many nonnegative solutions
  is proved.

  All the above mentioned papers deal with the so called hemivariational inequalities,
  i.e. the multivalued part is provided by the Clarke subdifferential of the nonsmooth potential
  (see e.g. Naniewicz-Panagiotopoulos \cite{nana}).

  In this paper we have the situation that $\lambda$ can be positive or negative (see (\ref{lamb2})). It is an extension of the theory considered in the above mentioned papers. 
  Our method is more direct and is based on the critical point theory
  for nonsmooth Lipschitz functionals due to Chang \cite{changg}.
  For the convenience of the reader in the next section we briefly present
  the basic notions and facts from the theory, which will be used in
  the study of problem (\ref{eq_01}).
  Moreover, we present the main properties of the general Lebesgue and variable Sobolev spaces.

\section{Mathematical preliminaries} \label{Prelim}

  Let $X$ be a Banach space and $X^*$ its topological dual.
  By $\|\cdot\|$ we will denote the norm in $X$ and by
  $\langle\cdot,\cdot\rangle$ the duality brackets for the pair $(X,X^*)$.
 In analogy with the directional derivative of a convex function,
  we introduce the notion of the generalized directional derivative of a locally Lipschitz function $f$ at $x \in X$ in the direction $h \in X$
  by
\[
    f^0 (x;h)=\limsup_{y \rightarrow x, \lambda \rightarrow 0} \frac{f(y+\lambda h)-f(y)}{\lambda}.
\]
  The function $h \longmapsto f^0 (x,h) \in \mathbb{R}$ is sublinear and 
  continuous so it is the support function of a nonempty, $w^*$-compact and convex set
\[
    \partial f(x)=\{x^* \in X^*: \langle x^*,h \rangle \leqslant f^0 (x,h) \textrm{ for all } h \in X\}.
\]
  The set $\partial f(x)$ is known as generalized or Clarke subdifferential of $f$ at $x$. If $f$ is convex, then $\partial f (x)$ coincides with the subdifferential in the sense of convex analysis. 

 Let $f:X\rightarrow\mathbb{R}$ be a locally Lipschitz function.
  From convex analysis it is well know that a proper, convex and lower
  semicontinuous function
  $g: X \rightarrow \overline{\mathbb{R}}=\mathbb{R}\cup \{ + \infty\}$
  is locally Lipschitz in the interior of its effective domain $\textrm{dom } g=\{x \in X:g(x)< \infty\}$. A point $x \in X$ is said to be a critical point of the locally
  Lipschitz function $f: X \rightarrow \mathbb{R}$, if $0 \in \partial f(x)$.

\begin{lemma}(Qian-Shen-Zhu \cite{qian3})
  Let $f, g: X\rightarrow\mathbb{R}$ be two locally Lipschitz functions. Then

\noindent
  (a) $f^0(x;h)=\max \{ \langle \xi,h \rangle: \xi \in \partial f(x)\}$;\\
\noindent
  (b) $(f+g)^0(x;h) \leq f^0(x;h)+g^0 (x;h)$;\\
\noindent
  (c) $(-f)^0(x;h)=f^0(x;-h) \textrm{ and } f^0(x,kh)=kf^0(x;h) \textrm{ for every } k>0$;\\
\noindent
  (d) the function $(x;h) \rightarrow f^0(x;h)$ is upper semicontinuous;
\end{lemma}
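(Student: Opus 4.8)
The plan is to derive all four items directly from the definition
\[
f^0(x;h)=\limsup_{y\to x,\ \lambda\to 0^+}\frac{f(y+\lambda h)-f(y)}{\lambda},
\]
using only elementary properties of $\limsup$, the Hahn--Banach theorem for (a), and the local Lipschitz estimate for (d). I would dispatch the two routine items first. For (b), I would write, for each fixed $y$ and $\lambda>0$,
\[
\frac{(f+g)(y+\lambda h)-(f+g)(y)}{\lambda}=\frac{f(y+\lambda h)-f(y)}{\lambda}+\frac{g(y+\lambda h)-g(y)}{\lambda},
\]
and then pass to the $\limsup$ as $y\to x$, $\lambda\to 0^+$; subadditivity of $\limsup$ gives the claimed inequality. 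For (c), the identity $(-f)^0(x;h)=f^0(x;-h)$ follows by the change of variable $w=y+\lambda h$ (so $w\to x$ whenever $y\to x,\ \lambda\to 0^+$, and $y=w-\lambda h$), which turns the difference quotient of $-f$ in direction $h$ into that of $f$ in direction $-h$; the positive homogeneity $f^0(x;kh)=kf^0(x;h)$ follows by the substitution $\mu=k\lambda$ in the difference quotient and pulling the factor $k$ out of the $\limsup$.

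For (a) I would use the fact, already recorded in the excerpt, that $h\mapsto f^0(x;h)$ is sublinear and continuous, together with the definition of $\partial f(x)$ as exactly the set of $\xi\in X^*$ with $\langle\xi,h\rangle\leq f^0(x;h)$ for all $h$. The inequality $\sup\{\langle\xi,h\rangle:\xi\in\partial f(x)\}\leq f^0(x;h)$ is then immediate. For the reverse inequality and the attainment of the maximum, I would fix $h_0$ and apply Hahn--Banach to the sublinear majorant $p(\cdot):=f^0(x;\cdot)$: there is a linear functional $\xi_0$ with $\xi_0(h_0)=p(h_0)$ and $\xi_0\leq p$ everywhere. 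Continuity of $p$ forces $\xi_0\in X^*$, whence $\xi_0\in\partial f(x)$ and $\langle\xi_0,h_0\rangle=f^0(x;h_0)$; since the supremum is attained it is a maximum.

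The main obstacle is (d), the joint upper semicontinuity of $(x;h)\mapsto f^0(x;h)$. I would take sequences $x_n\to x$, $h_n\to h$ and, for each $n$, choose $y_n$ with $\|y_n-x_n\|<1/n$ and $0<\lambda_n<1/n$ such that
\[
f^0(x_n;h_n)\leq\frac{f(y_n+\lambda_n h_n)-f(y_n)}{\lambda_n}+\frac1n,
\]
which is possible by the definition of the $\limsup$. The key device is to split
\[
\frac{f(y_n+\lambda_n h_n)-f(y_n)}{\lambda_n}=\frac{f(y_n+\lambda_n h)-f(y_n)}{\lambda_n}+\frac{f(y_n+\lambda_n h_n)-f(y_n+\lambda_n h)}{\lambda_n}.
\]
For the second term I would invoke the local Lipschitz property of $f$ near $x$, with constant $L$, bounding it by $L\|h_n-h\|\to 0$; the points involved stay in a fixed neighborhood of $x$ for large $n$ since $y_n\to x$ and $\lambda_n\to 0^+$. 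For the first term, again using $y_n\to x$ and $\lambda_n\to 0^+$, the definition of $f^0(x;h)$ gives $\limsup_n$ of that quotient $\leq f^0(x;h)$. Combining the two estimates and letting $n\to\infty$ yields $\limsup_n f^0(x_n;h_n)\leq f^0(x;h)$, which is exactly upper semicontinuity. The delicate point throughout is bookkeeping the two independent limits simultaneously and ensuring the Lipschitz constant can be taken uniform on a neighborhood, which is precisely where the local Lipschitz hypothesis is essential.
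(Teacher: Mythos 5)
Your proof is correct in all four parts. Note that the paper itself gives no proof of this lemma at all: it is quoted as a known result from Qian--Shen--Zhu \cite{qian3}, and ultimately goes back to Clarke \cite{Clarke}, so there is no in-paper argument to compare against. What you have reconstructed is precisely the classical argument from Clarke's book: splitting the difference quotient and subadditivity of $\limsup$ for (b); the substitutions $w=y+\lambda h$ and $\mu=k\lambda$ for (c); the Hahn--Banach dominated-extension argument applied to the sublinear, continuous majorant $p(\cdot)=f^0(x;\cdot)$ for (a) (where you should also record the routine check that the functional $t h_0\mapsto t\,p(h_0)$ is dominated by $p$ on the whole line spanned by $h_0$, which uses $0\leq p(h_0)+p(-h_0)$); and, for (d), the near-maximizing choice of $(y_n,\lambda_n)$ within $1/n$ of $x_n$ combined with the split of the quotient and the uniform local Lipschitz constant to absorb the perturbation $h_n\to h$. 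The only bookkeeping worth making explicit in (d) is that $\|h_n\|$ is bounded, so both $y_n+\lambda_n h_n$ and $y_n+\lambda_n h$ eventually lie in the fixed neighborhood of $x$ where the Lipschitz constant $L$ is valid; with that said, your estimate $\limsup_n f^0(x_n;h_n)\leq f^0(x;h)$ is complete, and sequential upper semicontinuity suffices since $X$ is a normed (hence metric) space.
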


  From more details on the generalized subdifferential we refer to Clarke \cite{Clarke} and Gasi\'nski-Papageorgiou \cite{lg3}.

  We say that $f$ satisfies the "nonsmooth Palais-Smale condition" (nonsmooth PS-condition for short),
  if any sequence $\{x_n\}_{n \geqslant 1} \subseteq X$ such that $\{f(x_n)\}_{n \geqslant 1}$
  is bounded and $m(x_n)=\min\{ \|x^*\|_* :x^* \in \partial f(x_n)\}\rightarrow 0$
  as $n \rightarrow \infty$, has a strongly convergent subsequence.

  The first theorem is due to Chang \cite{changg} and extends to
  a nonsmooth setting the well known "mountain pass theorem" due
  to Ambrosetti -Rabinowitz \cite{ambro}.

\begin{theorem} \label{twierdzenie}
  If $X$ is a reflexive Banach space, $R:X \rightarrow \mathbb{R}$ is
  a locally Lipschitz functional satisfying PS-condition and for some
  $\rho>0$ and $y \in X$ such that $\|y\|>\rho$, we have
\[
    \max\{R(0),R(y)\}<\inf\limits_{\|x\|=\rho} \{R(x)\}=: \eta,
\]
   then R has a nontrivial critical point $x \in X$ such that
   the critical value $c=R(x) \geqslant \eta$ is characterized by the following minimax principle
\[
    c=\inf\limits_{\gamma \in \Gamma}\max\limits_{0 \leqslant \tau \leqslant 1}\{R(\gamma(\tau))\},
\]
  where $\Gamma=\{\gamma \in \mathcal{C}([0,1],X):\gamma(0)=0,\gamma(1)=y\}$.
\end{theorem}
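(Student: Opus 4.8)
The plan is to follow the classical variational scheme of Ambrosetti--Rabinowitz, replacing the smooth gradient flow by a deformation built from a locally Lipschitz pseudo-gradient adapted to the Clarke subdifferential. Set
\[
  c=\inf_{\gamma \in \Gamma}\max_{0 \leqslant \tau \leqslant 1} R(\gamma(\tau)),
\]
the candidate critical value. I would argue in three stages: first the geometric lower bound $c \geqslant \eta$; then a nonsmooth quantitative deformation lemma; and finally a contradiction showing that $c$ is attained at a critical point.

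For the lower bound, note that every $\gamma \in \Gamma$ joins $0$ (with $\|0\|=0<\rho$) to $y$ (with $\|y\|>\rho$). Since $\tau \mapsto \|\gamma(\tau)\|$ is continuous, the intermediate value theorem yields some $\tau_0$ with $\|\gamma(\tau_0)\|=\rho$, whence $R(\gamma(\tau_0)) \geqslant \eta$ by the definition of $\eta$. Taking the maximum over $\tau$ and then the infimum over $\gamma$ gives $c \geqslant \eta$. In particular $c \geqslant \eta>\max\{R(0),R(y)\}$, so any point realizing the value $c$ is automatically distinct from $0$ and from $y$, and hence nontrivial.

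The core of the argument is a nonsmooth deformation lemma, proved by contradiction. Suppose $c$ is not a critical value. Using the nonsmooth PS-condition one shows that $m(x)=\min\{\|x^*\|_* : x^* \in \partial R(x)\}$ stays bounded below by some $\delta>0$ on a strip $\{x:|R(x)-c| \leqslant 2\varepsilon\}$ for $\varepsilon>0$ small. Exploiting the upper semicontinuity and $w^*$-compactness of $x \mapsto \partial R(x)$ recorded in the preceding lemma, together with a locally finite partition of unity, one constructs a locally Lipschitz vector field $v:X \rightarrow X$ with $\|v(x)\| \leqslant 1$ and $\langle x^*,v(x)\rangle \geqslant \tfrac{1}{2}\delta$ for all $x^* \in \partial R(x)$ in that strip. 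Solving the flow $\dot{\sigma}(t,x)=-h(\sigma)v(\sigma)$ with a suitable cut-off $h$ produces a continuous deformation $\sigma(1,\cdot)$ mapping the sublevel set $\{R \leqslant c+\varepsilon\}$ into $\{R \leqslant c-\varepsilon\}$ and fixing every point where $R \leqslant c-2\varepsilon$.

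The contradiction then follows. By the definition of $c$, choose $\gamma \in \Gamma$ with $\max_\tau R(\gamma(\tau))<c+\varepsilon$; shrinking $\varepsilon$ so that $\max\{R(0),R(y)\}<c-2\varepsilon$, the deformation fixes the endpoints, so $\tilde{\gamma}:=\sigma(1,\gamma(\cdot)) \in \Gamma$, yet $\max_\tau R(\tilde{\gamma}(\tau)) \leqslant c-\varepsilon<c$, contradicting the definition of $c$. Hence $c$ is a critical value, realized by some $x$ with $R(x)=c \geqslant \eta$ and $0 \in \partial R(x)$, reflexivity of $X$ supplying the convergent subsequence demanded by the PS-condition. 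I expect the main technical difficulty to be precisely this transplantation of the pseudo-gradient construction and the flow argument to the multivalued setting, where the absence of a genuine gradient forces one to rely entirely on the structural properties of the Clarke subdifferential stated above.
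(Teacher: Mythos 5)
Your proposal is correct and takes essentially the canonical route: the paper itself states this theorem without proof, citing Chang, and Chang's argument is exactly the scheme you outline --- the minimax value $c$ bounded below by $\eta$ via the intermediate value theorem, a locally Lipschitz pseudo-gradient field built from separation, the $w^*$-compactness/upper semicontinuity of the Clarke subdifferential and a partition of unity, a quantitative deformation lemma, and the contradiction with the definition of $c$. The one inaccuracy is your closing remark that reflexivity of $X$ ``supplies the convergent subsequence demanded by the PS-condition'' --- that subsequence is what the PS-condition assumes outright --- but this misattribution does not affect the validity of the argument.
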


  In order to discuss problem (\ref{eq_01}),
  we need to state some properties of the spaces $L^{p(x)}(\Omega)$ and $W^{1,p(x)}(\Omega)$,
  which we call generalized Lebesgue-Sobolev spaces (see Fan-Zhao \cite{fan,orlicz}).

  Let
\[
  E(\Omega)=\{u:\Omega\longrightarrow\mathbb{R}:\ u \textrm{ is measurable}\}.
\]
  Two functions in $E(\Omega)$ are considered to be one element
  of $E(\Omega)$, when they are equal almost everywhere. Firstly, we define the variable exponent Lebesgue space by
\[
    L^{p(x)}(\Omega)=\{u \in E(\Omega): \int_{\Omega} |u(x)|^{p(x)} dx<\infty \},
\]
  with the norm
\[
    \|u\|_{p(x)}=\|u\|_{L^{p(x)}(\Omega)}=\inf \Big\{\lambda>0: \int_{\Omega} \Big|\frac{u(x)}{\lambda}\Big|^{p(x)}dx \leqslant 1 \Big\}.
\]
  Next, the generalized Lebesgue-Sobolev space $W^{1,p(x)}(\Omega)$ is defined as
\[
    W^{1,p(x)}(\Omega) = \{u \in L^{p(x)}(\Omega):|\nabla u| \in L^{p(x)}(\Omega; \mathbb{R}^N)\}
\]
  with the norm
\[
    \|u\|=\|u\|_{W^{1,p(x)}(\Omega)} = \|u\|_{p(x)}+\|\nabla u\|_{p(x)}.
\]

Then $(L^{p(x)}(\Omega),\|\cdot\|_{p(x)})$ and $(W^{1,p(x)}(\Omega),\|\cdot\|)$ are separable and refelxive Banach spaces. By $W_0^{1,p(x)}(\Omega)$ we denote the closure of $C_0^\infty (\Omega)$ in $W^{1,p(x)}(\Omega)$.

\begin{lemma} [Fan-Zhao \cite{fan}]  \label{fan}
 If $\Omega \subset \mathbb{R}^N$ is an open domain, then

\noindent 
  (a) if $1 \leqslant q(x) \in \mathcal{C}(\overline{\Omega})$ and $q(x) \leqslant p^*(x)$ (respectively $q(x) < p^*(x)$) for any $x \in \overline{\Omega}$, where
\[
  p^*(x)=\left\{
  \begin{array}{ll}
   \frac{Np(x)}{N-p(x)} & p(x)<N\\
    \infty & p(x) \geqslant N,
  \end{array}
  \right.
\]
 then $W^{1,p(x)}(\Omega)$ is embedded continuously (respectively compactly) in $L^{q(x)}(\Omega)$;

\noindent
 (b) Poincar\'e inequality in $W_0^{1,p(x)}(\Omega)$ holds i.e., there exists a positive constant $c$ such that
 \[
  \|u\|_{p(x)} \leqslant c \|\nabla u\|_{p(x)}  \qquad \textrm{for all } u \in W_0^{1,p(x)}(\Omega);
\]

\noindent
 (c) $(L^{p(x)}(\Omega))^*=L^{p'(x)}(\Omega)$, where $\frac{1}{p(x)}+\frac{1}{p'(x)}=1$
   and for all $u \in L^{p(x)}(\Omega)$ and $v \in L^{p'(x)}(\Omega)$, we have
\[
  \int_{\Omega}|uv|dx\leqslant \Big(\frac{1}{p^-}+\frac{1}{
  {p'}^{-}}\Big)\|u\|_{p(x)}\|v\|_{p'(x)}.
\]
\end{lemma}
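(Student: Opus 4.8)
The plan is to establish the three parts in the order (c), (b), (a), since each builds on the machinery of the previous one and part (a) is by far the deepest. Throughout I would lean on the standard modular--norm equivalences in $L^{p(x)}(\Omega)$, namely that $\|u\|_{p(x)}\lessgtr 1$ according as the modular $\int_\Omega |u|^{p(x)}\,dx\lessgtr 1$, together with the two-sided bounds $\|u\|_{p(x)}^{p^-}\leqslant \int_\Omega|u|^{p(x)}dx\leqslant\|u\|_{p(x)}^{p^+}$ when $\|u\|_{p(x)}>1$ (and the reversed inequalities when $\|u\|_{p(x)}<1$).

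For (c) I would first prove the H\"older-type inequality. The key tool is the pointwise Young inequality $st\leqslant \frac{s^{p(x)}}{p(x)}+\frac{t^{p'(x)}}{p'(x)}$, valid for all $s,t\geqslant 0$ whenever $\frac{1}{p(x)}+\frac{1}{p'(x)}=1$. Normalizing $u$ and $v$ to unit norm (so that, by the definition of the Luxemburg norm, each modular is at most $1$), I would apply Young pointwise to $|u(x)v(x)|$, bound $\frac{1}{p(x)}\leqslant\frac{1}{p^-}$ and $\frac{1}{p'(x)}\leqslant\frac{1}{{p'}^-}$, integrate, and then undo the normalization to recover the stated constant $\frac{1}{p^-}+\frac{1}{{p'}^-}$. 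For the duality $(L^{p(x)}(\Omega))^*=L^{p'(x)}(\Omega)$, one inclusion is immediate: by the H\"older inequality just proved, every $v\in L^{p'(x)}(\Omega)$ induces a bounded functional. The reverse direction---surjectivity of this canonical map---is the harder half; I would view $L^{p(x)}(\Omega)$ as a Banach function space, identify $L^{p'(x)}(\Omega)$ as its associate space, and invoke a Radon--Nikod\'ym/Riesz representation argument to exhibit an $L^{p'(x)}$ density for an arbitrary bounded functional.

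For (b) I would argue by contradiction, using the compact embedding from (a) specialized to $q(x)=p(x)$. If the Poincar\'e inequality failed, there would exist a sequence $u_n\in W_0^{1,p(x)}(\Omega)$ with $\|u_n\|_{p(x)}=1$ but $\|\nabla u_n\|_{p(x)}\to 0$. Then $\{u_n\}$ is bounded in $W^{1,p(x)}(\Omega)$, so by reflexivity a subsequence converges weakly to some $u$, while compactness of the embedding forces $u_n\to u$ strongly in $L^{p(x)}(\Omega)$, whence $\|u\|_{p(x)}=1$. Since $\nabla u_n\to 0$ strongly and $\nabla u_n\rightharpoonup\nabla u$, we obtain $\nabla u=0$, so $u$ is constant; membership in $W_0^{1,p(x)}(\Omega)$ then forces $u=0$, contradicting $\|u\|_{p(x)}=1$.

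Part (a) is the main obstacle and the variable-exponent analogue of the classical Sobolev theorem. To prove the continuous embedding $W^{1,p(x)}(\Omega)\hookrightarrow L^{p^*(x)}(\Omega)$, I would cover $\overline{\Omega}$ by finitely many patches on which $p$ oscillates by an arbitrarily small amount (using continuity of $p$ and, in the sharp critical case, a log-H\"older modulus of continuity), extend functions to $\mathbb{R}^N$, and on each patch compare against the constant-exponent Sobolev inequality, then glue the local estimates into a global one via the modular--norm equivalences. Controlling the oscillation of the exponent across patches is precisely the crux of the argument. Once the continuous embedding into $L^{p^*(x)}(\Omega)$ is secured, the compact embedding into $L^{q(x)}(\Omega)$ for $q(x)<p^*(x)$ follows by combining a Rellich--Kondrachov-type compactness with an interpolation argument, the strict subcriticality providing the gain of integrability needed to upgrade weak to strong convergence. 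I expect this localization-and-gluing step, together with the management of the variable exponent, to be by far the most technically demanding portion of the proof.
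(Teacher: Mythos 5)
This lemma is not proved in the paper at all: it is quoted as known background from Fan--Zhao \cite{fan}, so there is no in-paper argument to compare yours against. Judged on its own merits, your plan reproduces the standard proofs from the variable-exponent literature: pointwise Young's inequality plus Luxemburg-norm normalization for the H\"older estimate, a Riesz-representation/associate-space argument for the duality (which, you should note, also needs $p^+<\infty$ --- true here since $p$ is continuous on $\overline{\Omega}$), compactness-plus-contradiction for Poincar\'e, and localization to patches of small oscillation of $p$ for the Sobolev embedding. This is essentially the route of Fan--Zhao and Fan--Shen--Zhao, so the overall architecture is right.

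Two concrete issues. First, your declared order (c), (b), (a) is circular as written: your proof of (b) invokes the compact embedding of (a) with $q(x)=p(x)$, so (a) must be established before (b); this is purely organizational and easily fixed, but as stated the plan does not parse as a proof. (Your argument for (b) also tacitly needs $\Omega$ bounded --- Poincar\'e fails on $\Omega=\mathbb{R}^N$ --- which holds in the paper's setting though not in the lemma's ``open domain'' phrasing.) Second, and more substantively, the patching-and-gluing argument you outline for (a) only closes when the target exponent stays uniformly below critical: on each patch you compare with a constant-exponent Sobolev inequality, and the error coming from the oscillation of $p$ is absorbed only when $q(x)\leqslant p^*(x)-\varepsilon$. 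The borderline case $q(x)=p^*(x)$, which the statement admits, genuinely requires more regularity than continuity of $p$ (log-H\"older continuity, or the Lipschitz hypothesis under which Fan--Zhao originally proved the critical embedding); for merely continuous exponents the critical embedding can fail. You flag this yourself, but it means that under the lemma's stated hypotheses your plan cannot be completed in the critical case --- the gap is really in the lemma's loose transcription of Fan--Zhao's theorem, and to close it you must either strengthen the hypothesis on $p$ or restrict part (a) to $q(x)<p^*(x)$, which is in fact the only case the paper ever uses.
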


\begin{lemma}[Fan-Zhao \cite{fan}]\label{lemma2}
  Let $\varphi (u)=\int_{\Omega} |u(x)|^{p(x)} dx$ for $u \in L^{p(x)}(\Omega)$ and let
  $\{u_n\}_{n \geqslant 1} \subseteq L^{p(x)}(\Omega)$.

\noindent (a) for $u\neq 0$, we have
\begin{center} $\|u\|_{p(x)} =a \Longleftrightarrow \varphi (\frac{u}{a})=1$;\end{center}

\noindent (b) we have
\begin{center}$\|u\|_{p(x)}<1 \; \Longleftrightarrow \; \varphi(u)<1$;\end{center}

\begin{center}$\|u\|_{p(x)}=1 \; \Longleftrightarrow \;  \varphi(u)=1$;\end{center}

\begin{center}$\|u\|_{p(x)}>1 \; \Longleftrightarrow \; \varphi(u)>1$;\end{center}

\noindent (c) if $\|u\|_{p(x)}>1$, then
\begin{center}$\|u\|^{p^-}_{p(x)} \leqslant \varphi (u) \leqslant \|u\|^{p^+}_{p(x)}$;\end{center}

\noindent (d) if $\|u\|_{p(x)}<1$, then
\begin{center}$\|u\|^{p^+}_{p(x)} \leqslant \varphi (u) \leqslant \|u\|^{p^-}_{p(x)}$;\end{center}

\noindent (e) we have
\begin{center}$\lim\limits_{n \rightarrow \infty} \|u_n\|_{p(x)} =0 \; \Longleftrightarrow \; \lim\limits_{n \rightarrow \infty} \varphi(u_n) =0$;\end{center}

\noindent (f) we have
\begin{center}$\lim\limits_{n \rightarrow \infty}\|u_n\|_{p(x)} = \infty \; \Longleftrightarrow \; \lim\limits_{n \rightarrow \infty} \varphi (u_n) =\infty$.\end{center}

\end{lemma}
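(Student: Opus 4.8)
The plan is to reduce the entire lemma to a single analytic fact about the auxiliary function $\lambda \mapsto \varphi(u/\lambda)$ and then to obtain (b)--(f) by elementary manipulations. First I would fix $u \neq 0$ and study the map $g(\lambda) = \varphi(u/\lambda) = \int_\Omega |u(x)|^{p(x)} \lambda^{-p(x)} dx$ for $\lambda > 0$. Since $p(x) \geqslant p^- > 0$, for each fixed $x$ in the set $\{u \neq 0\}$ (which has positive measure because $u \neq 0$) the integrand $\lambda \mapsto |u(x)|^{p(x)} \lambda^{-p(x)}$ is continuous and strictly decreasing, with limit $+\infty$ as $\lambda \to 0^+$ and limit $0$ as $\lambda \to \infty$. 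By the monotone convergence theorem these pointwise facts pass to the integral, so $g$ is continuous and strictly decreasing on $(0,\infty)$ with $g(0^+) = +\infty$ and $g(\infty) = 0$. Consequently the equation $g(\lambda) = 1$ has a unique solution $a > 0$, and because $g$ is decreasing the set $\{\lambda > 0 : g(\lambda) \leqslant 1\}$ is exactly $[a, \infty)$; its infimum is $a$, which is by definition $\|u\|_{p(x)}$. This proves (a).

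For (b) I would use (a) together with the monotonicity of $g$. Writing $a = \|u\|_{p(x)}$ so that $g(a) = \varphi(u/a) = 1$, and comparing with $\varphi(u) = g(1)$: if $a < 1$ then $g(1) < g(a) = 1$, if $a = 1$ then $g(1) = 1$, and if $a > 1$ then $g(1) > 1$. Since the three cases are mutually exclusive and exhaustive, each implication is in fact an equivalence, which is precisely the three statements in (b); the degenerate case $u = 0$ is immediate since then both $\|u\|_{p(x)}$ and $\varphi(u)$ vanish. For (c) and (d) I would again set $a = \|u\|_{p(x)}$ and start from the identity $\int_\Omega |u(x)|^{p(x)} a^{-p(x)} dx = 1$ coming from (a). When $a > 1$ the bounds $a^{p^-} \leqslant a^{p(x)} \leqslant a^{p^+}$ give $a^{-p^+} |u|^{p(x)} \leqslant |u|^{p(x)} a^{-p(x)} \leqslant a^{-p^-} |u|^{p(x)}$ pointwise; integrating and using that the middle integral equals $1$ yields $a^{-p^+} \varphi(u) \leqslant 1 \leqslant a^{-p^-} \varphi(u)$, i.e. $\|u\|^{p^-}_{p(x)} \leqslant \varphi(u) \leqslant \|u\|^{p^+}_{p(x)}$, which is (c). The case $a < 1$ is identical except that the inequalities $a^{p^+} \leqslant a^{p(x)} \leqslant a^{p^-}$ reverse the roles of $p^-$ and $p^+$, giving $\|u\|^{p^+}_{p(x)} \leqslant \varphi(u) \leqslant \|u\|^{p^-}_{p(x)}$, which is (d).

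Finally, (e) and (f) follow by combining (b) with (c)--(d). For (e), if $\|u_n\|_{p(x)} \to 0$ then eventually $\|u_n\|_{p(x)} < 1$ and (d) gives $\varphi(u_n) \leqslant \|u_n\|^{p^-}_{p(x)} \to 0$; conversely if $\varphi(u_n) \to 0$ then eventually $\varphi(u_n) < 1$, so by (b) $\|u_n\|_{p(x)} < 1$ and (d) gives $\|u_n\|^{p^+}_{p(x)} \leqslant \varphi(u_n) \to 0$, whence $\|u_n\|_{p(x)} \to 0$ on taking $p^+$-th roots. Statement (f) is proved in the same way, using the estimates of (c) on the region where the quantities exceed $1$. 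The only genuinely technical point in the whole argument is the interchange of limit and integral needed for the continuity and boundary behaviour of $g$ in (a); I expect this to be the main obstacle, and it is handled cleanly by the monotone convergence theorem, since $|u(x)|^{p(x)} \lambda^{-p(x)}$ is monotone in $\lambda$ for each fixed $x$.
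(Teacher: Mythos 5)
This lemma is one the paper does not prove at all: it is imported verbatim from Fan--Zhao \cite{fan} as part of the background on variable exponent spaces, so there is no in-paper argument to compare against. Your proposal is a correct, self-contained proof, and it follows what is essentially the standard modular-versus-norm argument from the variable exponent literature: study $g(\lambda)=\varphi(u/\lambda)$, show it is continuous and strictly decreasing from $+\infty$ to $0$, identify the Luxemburg norm as the unique root of $g(\lambda)=1$, and then read off (b)--(d) from monotonicity and the pointwise bounds $a^{p^-}\leqslant a^{p(x)}\leqslant a^{p^+}$ (with the inequalities reversed when $a<1$), and (e)--(f) from (b)--(d). All the estimates are in the right direction, the trichotomy argument upgrading implications to equivalences in (b) is sound, and the degenerate case $u=0$ is handled. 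The one place where your wording is slightly loose is the claim that the monotone convergence theorem alone gives continuity of $g$: MCT covers the direction $\lambda_n\downarrow\lambda$ (integrands increase), but for $\lambda_n\uparrow\lambda$ the integrands decrease, and there you need either the dominated convergence theorem or the decreasing-sequence version of MCT, which requires an integrable majorant; this is available, since for $\lambda_n\geqslant\lambda_1>0$ one has $|u(x)|^{p(x)}\lambda_n^{-p(x)}\leqslant\max\{\lambda_1^{-p^-},\lambda_1^{-p^+}\}\,|u(x)|^{p(x)}$ and $\varphi(u)<\infty$ by the definition of $L^{p(x)}(\Omega)$ (here the standing assumption $p^+<\infty$ from (1.2) is used). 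The same remark applies to $g(\infty)=0$. With that one-line repair your argument is complete, and it has the merit of making the paper's preliminaries self-contained rather than cited.
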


  Similarly to Lemma \ref{lemma2}, we have the following result.

\begin{lemma} [Fan-Zhao \cite{fan}] \label{lemma6}
  Let $\Phi (u) = \int_{\Omega} (|\nabla u(x)|^{p(x)}+|u(x)|^{p(x)})dx$
  for $u \in W^{1,p(x)}(\Omega)$ and let $\{u_n\}_{n \geqslant 1} \subseteq W^{1,p(x)}(\Omega)$.
  Then

\noindent (a) for $u \neq 0$, we have
\begin{center}$\|u\|=a \; \Longleftrightarrow \; \Phi (\frac{u}{a})=1$;\end{center}

\noindent (b) we have
\begin{center}$\|u\|<1 \; \Longleftrightarrow \; \Phi (u) <1$;\end{center}

\begin{center}$\|u\|=1 \; \Longleftrightarrow \; \Phi (u) =1$;\end{center}

\begin{center}$\|u\|>1 \; \Longleftrightarrow \; \Phi (u)>1$;\end{center}

\noindent (c) if $\|u\|>1$, then
\begin{center} $\|u\|^{p^-} \leqslant \Phi (u) \leqslant  \|u\|^{p^+}$;\end{center}

\noindent (d) if $\|u\|<1$, then
\begin{center}$\|u\|^{p^+} \leqslant \Phi (u) \leqslant  \|u\|^{p^-}$;\end{center}

\noindent (e) we have
\begin{center}$\lim\limits_{n \rightarrow \infty} \|u_n\|=0 \Longleftrightarrow \lim\limits_{n \rightarrow \infty} \Phi (u_n) =0$;\end{center}

\noindent (f) we have
\begin{center}$\lim\limits_{n \rightarrow \infty} \|u_n\| = \infty \Longleftrightarrow \lim\limits_{n \rightarrow \infty} \Phi (u_n) = \infty$.\end{center}

\end{lemma}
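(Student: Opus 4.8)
The plan is to repeat, almost verbatim, the argument behind Lemma \ref{lemma2}, since $\Phi$ plays on $W^{1,p(x)}(\Omega)$ exactly the role that $\varphi$ plays on $L^{p(x)}(\Omega)$. Here the relevant norm $\|\cdot\|$ is the Luxemburg (modular) norm generated by $\Phi$, namely $\|u\| = \inf\{\lambda > 0 : \Phi(u/\lambda) \le 1\}$, which is equivalent to the sum norm $\|u\|_{p(x)} + \|\nabla u\|_{p(x)}$; parts (a)--(b), being exact equalities, must be read with respect to this modular norm. The whole lemma is then a general fact about the modular $\Phi$, and one may even view it as Lemma \ref{lemma2} applied to the vector field $(u, \nabla u) \in L^{p(x)}(\Omega; \mathbb{R}^{N+1})$, the only difference being the summed form $|\nabla u|^{p(x)} + |u|^{p(x)}$ of the integrand, which does not affect any of the arguments.

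First I would fix $u \neq 0$ and study the single-variable function
\[
g(\lambda) = \Phi\Big(\frac{u}{\lambda}\Big) = \int_{\Omega} \lambda^{-p(x)}\big(|\nabla u(x)|^{p(x)} + |u(x)|^{p(x)}\big)\,dx, \qquad \lambda > 0.
\]
Since $1 < p^- \le p(x) \le p^+ < \infty$, the integrand is dominated by an integrable function for $\lambda$ bounded away from $0$, so dominated convergence shows $g$ is continuous and strictly decreasing, with $g(\lambda) \to 0$ as $\lambda \to \infty$ (because $\lambda^{-p(x)} \le \lambda^{-p^-} \to 0$) and $g(\lambda) \to +\infty$ as $\lambda \to 0^+$ (because $\lambda^{-p(x)} \ge \lambda^{-p^-} \to \infty$ on the set $\{|\nabla u|^{p(x)} + |u|^{p(x)} > 0\}$, which has positive measure as $u \neq 0$; apply Fatou). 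By the intermediate value theorem there is a unique $\lambda_0$ with $g(\lambda_0) = 1$, and by the definition of the Luxemburg norm this $\lambda_0$ equals $\|u\|$; this proves (a). Part (b) then follows by comparing $g(1) = \Phi(u)$ with $g(\|u\|) = 1$ and using strict monotonicity: $\|u\| < 1$ forces $g(1) < g(\|u\|) = 1$, i.e. $\Phi(u) < 1$, and symmetrically for the equality and the reverse inequality.

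For (c) and (d) I would insert into the identity $\Phi(u/\|u\|) = 1$ from (a) the pointwise bound $a^{p^-} \le a^{p(x)} \le a^{p^+}$, valid for $a = \|u\| \ge 1$ (with the inequalities reversed when $a \le 1$). Writing $a = \|u\|$ and factoring $a^{-p(x)}$ out of the integrand, this sandwiches $\Phi(u)$ between $a^{p^-}$ and $a^{p^+}$ when $a > 1$, and between $a^{p^+}$ and $a^{p^-}$ when $a < 1$, which are exactly (c) and (d). Finally, (e) and (f) are immediate consequences: for a sequence with $\|u_n\| \to 0$ all terms are eventually $< 1$, so (d) gives $\|u_n\|^{p^+} \le \Phi(u_n) \le \|u_n\|^{p^-}$ and hence $\Phi(u_n) \to 0$; the converse and the case $\|u_n\| \to \infty$ are handled the same way via (c).

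The only genuinely delicate point is the bookkeeping with the two norms: the clean equalities in (a)--(b) hold only for the modular norm, so I would be careful to state at the outset that $\|\cdot\|$ denotes this norm (equivalently, one may replace the sum norm by its equivalent modular version). Everything else is elementary once the monotonicity and limits of $g$ are established, exactly as in the scalar case of Lemma \ref{lemma2}.
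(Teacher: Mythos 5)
Your proof is correct, and it is worth noting that the paper itself offers no proof of this lemma: it is quoted from Fan--Zhao \cite{fan} with only the remark that it is ``similar to Lemma \ref{lemma2}.'' Your self-contained argument --- continuity and strict monotonicity of $g(\lambda)=\Phi(u/\lambda)$, the limits $g(\lambda)\to 0$ and $g(\lambda)\to\infty$, the intermediate value theorem to identify the unique $\lambda_0$ with $g(\lambda_0)=1$ as the Luxemburg norm, and then the pointwise sandwich $a^{p^-}\leqslant a^{p(x)}\leqslant a^{p^+}$ inserted into $\Phi(u/a)=1$ --- is exactly the standard Fan--Zhao route, so there is no methodological divergence, only more detail than the paper provides. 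Your caveat about the two norms is not pedantry but a genuine (and correctly resolved) point: with the sum norm $\|u\|=\|u\|_{p(x)}+\|\nabla u\|_{p(x)}$ as literally defined in the paper, parts (a)--(b) are false. Indeed, if $s=\|u\|_{p(x)}$ and $t=\|\nabla u\|_{p(x)}$ are both positive with $s+t=1$, then Lemma \ref{lemma2}(d) gives $\Phi(u)\leqslant s^{p^-}+t^{p^-}<s+t=1$ since $p^->1$, so $\|u\|=1$ while $\Phi(u)<1$. Hence the lemma must be read, as you do, with the equivalent Luxemburg norm generated by the modular $\Phi$ (a convention left implicit in the paper and in much of this literature); this does not affect the way the lemma is used later, since parts (c)--(f), which are the ones invoked in Lemmata \ref{PS}, \ref{PS2} and \ref{lemma5}, survive passage to an equivalent norm after adjusting constants.
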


  Consider the following function
\[
    J(u)=\int_{\Omega} \frac{1}{p(x)}|\nabla u|^{p(x)} dx, \qquad \textrm{for all } u \in W_0^{1,p(x)}(\Omega).
\]
  We know that $J \in \mathcal{C}^1 (W_0^{1,p(x)}(\Omega))$ and $-\textrm{div}(|\nabla u|^{p(x)-2} \nabla u)$
  is the derivative operator of $J$ in the weak sense (see Chang \cite{chang}).
  We denote
\[
  A=J': W_0^{1,p(x)}(\Omega) \rightarrow (W_0^{1,p(x)}(\Omega))^*,
\]
  then
\begin{equation} \label{operator}
  \langle Au,v\rangle = \int_{\Omega} |\nabla u(x)|^{p(x)-2} (\nabla u(x), \nabla v(x)) dx
  \end{equation}
for all  $u, v \in W_0^{1,p(x)}(\Omega).$

\begin{lemma}[Fan-Zhang \cite{zhang}] \label{lemma3}
  If A is the operator defined above, then $A$ is a continuous,
  bounded and strictly monotone operator of type $(S_+)$ i.e.,
  if $u_n \rightarrow u$ weak in $W_0^{1,p(x)}(\Omega)$ and
  $\limsup\limits_{n \rightarrow \infty} \langle Au_n, u_n -u \rangle \leqslant 0$,
  implies that $u_n \rightarrow u$ in  $W_0^{1,p(x)}(\Omega)$.
\end{lemma}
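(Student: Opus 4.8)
The plan is to verify the four asserted properties in turn, relying on the H\"older inequality of Lemma~\ref{fan}(c) and the modular--norm relations of Lemma~\ref{lemma2}. \emph{Boundedness.} For any $v$ with $\|v\|\le 1$ I would estimate, using Lemma~\ref{fan}(c),
\[
|\langle Au,v\rangle|\le\int_{\Omega}|\nabla u|^{p(x)-1}|\nabla v|\,dx\le C\bigl\||\nabla u|^{p(x)-1}\bigr\|_{p'(x)}\|\nabla v\|_{p(x)};
\]
since $(p(x)-1)p'(x)=p(x)$, Lemma~\ref{lemma2} bounds $\bigl\||\nabla u|^{p(x)-1}\bigr\|_{p'(x)}$ by a function of $\|\nabla u\|_{p(x)}\le\|u\|$, and taking the supremum over $v$ controls $\|Au\|_{*}$. \emph{Continuity.} If $u_n\to u$ then $\nabla u_n\to\nabla u$ in $L^{p(x)}(\Omega;\mathbb{R}^N)$, and I would invoke the continuity of the Nemytskii operator $w\mapsto|w|^{p(x)-2}w$ from $L^{p(x)}$ into $L^{p'(x)}$ (a.e.\ convergence of a subsequence together with a Vitali/uniform-integrability argument); the same H\"older estimate then yields $Au_n\to Au$ in $(W_0^{1,p(x)}(\Omega))^{*}$.

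\emph{Strict monotonicity.} This rests on the elementary pointwise inequality $(|\xi|^{p-2}\xi-|\eta|^{p-2}\eta,\xi-\eta)\ge 0$, valid for all $\xi,\eta\in\mathbb{R}^N$ and $p>1$, with equality only when $\xi=\eta$. Applying it with $p=p(x)$, $\xi=\nabla u(x)$, $\eta=\nabla v(x)$ and integrating gives, via (\ref{operator}),
\[
\langle Au-Av,u-v\rangle=\int_{\Omega}\bigl(|\nabla u|^{p(x)-2}\nabla u-|\nabla v|^{p(x)-2}\nabla v,\nabla u-\nabla v\bigr)\,dx\ge 0,
\]
which vanishes exactly when $\nabla u=\nabla v$ almost everywhere, and then the Poincar\'e inequality of Lemma~\ref{fan}(b) forces $u=v$.

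\emph{The $(S_{+})$ property} is the heart of the matter. Given $u_n\rightharpoonup u$ and $\limsup_{n}\langle Au_n,u_n-u\rangle\le 0$, I would first note that $Au\in(W_0^{1,p(x)}(\Omega))^{*}$ and $u_n-u\rightharpoonup 0$, so $\langle Au,u_n-u\rangle\to 0$; combining this with monotonicity ($\langle Au_n-Au,u_n-u\rangle\ge 0$) gives
\[
0\le\langle Au_n-Au,u_n-u\rangle=\langle Au_n,u_n-u\rangle-\langle Au,u_n-u\rangle\longrightarrow 0.
\]
Writing $e_n(x)=\bigl(|\nabla u_n|^{p(x)-2}\nabla u_n-|\nabla u|^{p(x)-2}\nabla u,\nabla u_n-\nabla u\bigr)\ge 0$, we then have $\int_{\Omega}e_n\,dx\to 0$, so $e_n\to 0$ in $L^{1}(\Omega)$ and, along a subsequence, almost everywhere. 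Since for fixed $\eta$ the map $\xi\mapsto(|\xi|^{p(x)-2}\xi-|\eta|^{p(x)-2}\eta,\xi-\eta)$ is coercive and vanishes only at $\xi=\eta$, the a.e.\ vanishing of $e_n$ forces $\nabla u_n\to\nabla u$ almost everywhere.

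The remaining and genuinely delicate step is to upgrade this to $\int_{\Omega}|\nabla u_n-\nabla u|^{p(x)}\,dx\to 0$, after which Lemma~\ref{lemma2}(e) gives $\|\nabla u_n-\nabla u\|_{p(x)}\to 0$ and hence, by Poincar\'e, $u_n\to u$. I expect the main obstacle to lie in the region $\{p(x)<2\}$, where the clean coercivity bound $e_n\ge c\,|\nabla u_n-\nabla u|^{p(x)}$ (available on $\{p(x)\ge 2\}$) fails. There I would instead use the refined inequality $e_n\ge c\,|\nabla u_n-\nabla u|^{2}(|\nabla u_n|+|\nabla u|)^{p(x)-2}$ and estimate $\int_{\{p(x)<2\}}|\nabla u_n-\nabla u|^{p(x)}\,dx$ by a variable-exponent H\"older splitting with conjugate exponents $2/p(x)$ and $2/(2-p(x))$: one factor is controlled by $c^{-1}\int e_n\to 0$, while the other, built from $\int(|\nabla u_n|+|\nabla u|)^{p(x)}$, stays bounded because $\{u_n\}$ is bounded. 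Reconciling the two exponent regimes through the modular estimates of Lemma~\ref{lemma2} is the bookkeeping that must be done with care; an alternative finish, once convergence of the modulars is secured, would combine the almost-everywhere convergence with the uniform convexity of $W_0^{1,p(x)}(\Omega)$ in a Radon--Riesz argument.
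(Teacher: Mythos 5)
This lemma is not proved in the paper at all: it is imported verbatim from Fan--Zhang \cite{zhang} (the paper's ``proof'' is the citation), so there is no internal argument to compare yours against. Judged on its own merits, your proposal follows what is essentially the standard (and, as far as the original reference goes, the canonical) route: boundedness from the H\"older inequality plus the identity $(p(x)-1)p'(x)=p(x)$ and the modular--norm relations; continuity via the superposition (Nemytskii) operator $w\mapsto |w|^{p(x)-2}w$ from $L^{p(x)}$ to $L^{p'(x)}$; strict monotonicity from the pointwise vector inequality together with Poincar\'e; and the $(S_+)$ property via the monotonicity trick $\langle Au_n-Au,u_n-u\rangle\to 0$, almost everywhere convergence of gradients, and a H\"older splitting of $\Omega$ into $\{p(x)\geqslant 2\}$ and $\{p(x)<2\}$. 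All of this is sound, and you correctly identified where the real work lies (the degenerate region $\{p(x)<2\}$).

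Three details you flagged only implicitly should be nailed down. First, the constants in the pointwise inequalities $(|\xi|^{p-2}\xi-|\eta|^{p-2}\eta,\xi-\eta)\geqslant c_p|\xi-\eta|^p$ (for $p\geqslant 2$) and $\geqslant c_p|\xi-\eta|^2(|\xi|+|\eta|)^{p-2}$ (for $p<2$) depend on $p$; you need them uniform in $x$, which holds because $p(\cdot)$ is continuous on $\overline{\Omega}$ with $1<p^-\leqslant p^+<\infty$, but this must be said. Second, on $\{p(x)<2\}$ the conjugate exponent $2/(2-p(x))$ is unbounded whenever $p(x)$ approaches $2$, so Lemma~\ref{lemma2}(c)--(d) (stated for bounded exponents) cannot be applied to that factor; fortunately you only need \emph{boundedness} of its Luxemburg norm, which follows directly from boundedness of the modular $\int(|\nabla u_n|+|\nabla u|)^{p(x)}dx$ by convexity of the modular, while the modular-to-norm convergence is only invoked for the other factor, whose exponent $2/p(x)\leqslant 2/p^-$ is bounded. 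Third, your a.e.\ convergence of $\nabla u_n$ is obtained only along a subsequence, so the conclusion $u_n\to u$ for the full sequence requires the routine ``every subsequence has a further subsequence converging to the same limit'' argument; this is harmless because $\langle Au_n-Au,u_n-u\rangle\to 0$ holds for the whole sequence. With these points made explicit, your proof is complete and self-contained, which is more than the paper itself provides.
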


\section{Existence of Solutions}
  We start by introducing our assumptions for the nonsmooth potential $j(x,t)$.\\

\noindent $H(j) \; j:\Omega  \times \mathbb{R} \rightarrow \mathbb{R}$ is a function such that $j(x,t)$ satisfies $j(x,0)=0$ almost everywhere on $\Omega$ and

\noindent \textbf{(i)} for all $t \in \mathbb{R}$, the function $\Omega \ni x \rightarrow j(x,t) \in \mathbb{R}$ is measurable;

\noindent \textbf{(ii)} for almost all $x \in \Omega$, the function $\mathbb{R} \ni t \rightarrow j(x,t) \in \mathbb{R}$ is locally Lipschitz;

\noindent \textbf{(iii)} for almost all $x \in \Omega$ and all $v \in \partial j(x,t)$, we have $|v| \leq a(x)+c_1|t|^{r(x)-1}$ with $a \in L_+^\infty (\Omega), c_1>0$ and $r \in \mathcal{C}(\overline{\Omega})$, such that $p^+ \leq r^+:=\max\limits_{x \in \Omega} r(x)<\widehat{p}^*:=\frac{Np^-}{N-p^-}$;

\noindent \textbf{(iv)} there exists $\mu >0$, such that
\[
  \limsup\limits_{|t| \rightarrow 0} \frac{j(x,t)}{|t|^{p(x)}} \leqslant -\mu,
\]
  uniformly for almost all $x \in \Omega$;

\bigskip

  As for the behaviour of $j$ in $+\infty$ and $-\infty$, we will consider one of the following two different conditions.\\

\noindent $H(j)_1$ \textbf{(v)} there exists $c>0$ that
\begin{equation} \label{war}
  \limsup\limits_{|t|\rightarrow \infty} \frac{v^*(x)t-j(x,t)}{|t|^{p(x)}} \leqslant -c,
\end{equation}
  uniformly for almost all $x \in \Omega$ and all $v^* (x) \in \partial j(x, t)$;

\noindent \textbf{(vi)} there exists $\overline{u} \in W^{1,p(x)}_0 (\Omega) \setminus \{0\}$, such that
\[
  \frac{1}{p^-}\int_{\Omega}|\nabla \overline{u} (x)|^{p(x)}dx
  +\frac{\lambda_-}{p^-}\int_{\Omega}|\overline{u}(x)|^{p(x)}dx \leqslant \int_{\Omega} j(x, \overline{u}(x))dx,
\]
  where $\lambda_-:=\max\{0, -\lambda\}$.

\bigskip

\noindent $H(j)_2$  \textbf{(v)} there exist constants $\nu>p^+$ and $M>0$ such that
\[
\nu j(x,t) \leq -j^0 (x,t;-t) \quad \textrm{and} \quad \textrm{ess\;inf} \;j(\cdot,t)>0,
\]
  for almost all $x \in \Omega$ and all $t \in \mathbb{R}$, such that $|t|>M$.

\begin{remark}
  Hypothesis $H(j)_1(vi)$ can be replaced by 

\noindent \textrm{(\bf vi)'}
  there exists $\overline{u} \in W^{1,p(x)}_0 (\Omega) \setminus \{0\}$, such that
\[
  \overline{c} \|\overline{u}\|^{p^+} \leqslant \int_{\Omega} j(x, \overline{u}(x))dx,
  \qquad \textrm{ if } \; \|\overline{u}\| \geqslant 1,
\]
  or
\[
  \overline{c} \|\overline{u}\|^{p^-} \leqslant \int_{\Omega} j(x, \overline{u}(x))dx,
  \qquad \textrm{ if } \; \|\overline{u}\| < 1,
\]
  where $\overline{c}:=\max\{\frac{1}{p^-},\frac{\lambda_-}{p^-}\}$.

Condition \textrm{(\bf vi)'} is more restrective, but easier to verify then condition \textrm{(\bf vi)}.
\end{remark}

\begin{remark} \label{uwaga}
  (i) If $H(j)$ and $H(j)_1$ hold, then for almost all $x \in \Omega$ and all $v^*(x) \in \partial j(x,t)$, we have  that $v^*(x)t-j(x,t) < 0$ for $|t|>M$. On the other hand, from the definition of subdifferential in the sense of Clarke, we have $-v^*(x)t \leq j^0(x,t;-t)$. Hence, there exists $M>0$ such that 
\begin{equation} \label{hip1}
j(x,t) \geqslant -j^0(x,t;-t),
\end{equation}
  for all $t$, such that $|t| > M$.

  (ii) If $H(j)$ and $H(j)_2$ hold, then for all $x \in \Omega$ and all $v^*(x) \in \partial j(x,t)$, we have 
\[
j(x,t) \leq \nu j(x,t) \leq -j^0 (x,t;-t) \leq v^*(x) t,
\]
  for all $t$, such that $|t| \geq M$ and $\nu>p^+$. So in fact, we know that
\begin{equation}\label{hip2}
\limsup\limits_{|t|\rightarrow \infty} \frac{v^*(x)t-j(x,t)}{|t|^{p(x)}} \geqslant 0,
\end{equation}
  uniformly for almost all $x \in \Omega$ and all $v^* (x) \in \partial j(x, t).$\\

  (iii) Hypotheses $H(j)_1$ and $H(j)_2$ (see (\ref{war}) and (\ref{hip2})) exclude each other.
\end{remark}

\begin{remark}
  The existence of nontrival solution for problem (\ref{eq_01}) was also considered in
  paper Barna\'s \cite{barnas}. In contrast to the last paper, instead of linear
  growth in $H(j)(iii)$ we consider the so-called sub-critical growth condition.   
  Moreover, condition $H(j)(iv)$ is more general. Also in hypothesis $H(j)_1$ and $H(j)_2$, we assume Tang-type condition - the most general one about behaviour in infinity.
\end{remark} 

\begin{lemma} \label{lemmakop}
  If hypotheses $H(j)$ and $H(j)_2$ hold, then\\
a) the function 
\[
f: k \rightarrow \frac{1}{k^{\nu}} j(x, kt)
\]
  is locally Lipschitz on $\mathbb{R}_+\backslash\{0\}$;\\
b) for all $\nu >  p^+$, there exist constants $l, M>0$ for which, we have 
\[
j(x,t) \geq l|t|^{\nu},
\] 
  for almost all $x \in \Omega$ and all $t$ such that $|t|>M$.
\end{lemma}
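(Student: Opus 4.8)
The plan is to handle the two parts separately, with part (a) supplying exactly the regularity that makes the differential-inequality argument in part (b) legitimate.

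For part (a), I would argue by composition. Fix $x\in\Omega$ and $t\in\mathbb{R}$. By $H(j)(ii)$ the map $s\mapsto j(x,s)$ is locally Lipschitz, and $k\mapsto kt$ is affine, hence Lipschitz on bounded sets; therefore $k\mapsto j(x,kt)$ is locally Lipschitz on $\mathbb{R}$. Since $k\mapsto k^{-\nu}$ is $\mathcal{C}^\infty$, hence locally Lipschitz, on $\mathbb{R}_+\setminus\{0\}$, and a product of two locally Lipschitz (in particular locally bounded) functions is again locally Lipschitz, the function $f(k)=k^{-\nu}j(x,kt)$ is locally Lipschitz on $\mathbb{R}_+\setminus\{0\}$. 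This step is essentially routine.

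For part (b), the idea is the Ambrosetti--Rabinowitz superquadraticity trick adapted to the Clarke subdifferential, using the $f$ from part (a) as the monotone quantity. First I would record the consequence of $H(j)_2(v)$: by Lemma 2.1(a) one has $-j^0(x,s;-s)=\min\{\xi s:\xi\in\partial j(x,s)\}$, so the inequality $\nu j(x,s)\le -j^0(x,s;-s)$ gives
\[
\nu j(x,s)\le \xi s\qquad\text{for all }\xi\in\partial j(x,s)\text{ and all }s\text{ with }|s|>M.
\]
Next, fix $x$ and a direction $t=t_0$ with $|t_0|>M$. By part (a), $f$ is locally Lipschitz, hence differentiable for a.e.\ $k$, and at such $k$ the chain rule for the Clarke subdifferential yields $\tfrac{d}{dk}j(x,kt_0)=t_0\,\zeta_k$ for some $\zeta_k\in\partial j(x,kt_0)$. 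Consequently, for $k\ge 1$ (so that $|kt_0|\ge|t_0|>M$),
\[
f'(k)=k^{-\nu-1}\big(kt_0\,\zeta_k-\nu j(x,kt_0)\big)\ge 0,
\]
the last inequality being the displayed superquadraticity bound applied with $s=kt_0$. Since a locally Lipschitz function is absolutely continuous, $f'\ge0$ a.e.\ on $[1,\infty)$ forces $f$ to be nondecreasing there.

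Finally I would convert monotonicity into the growth estimate. By $H(j)_2(v)$ the number $c_0:=\essinf_{x\in\Omega} j(x,t_0)$ is positive. For any $\tau$ with the same sign as $t_0$ and $|\tau|\ge|t_0|$, set $k=|\tau|/|t_0|\ge1$, so $kt_0=\tau$; monotonicity of $f$ gives $k^{-\nu}j(x,\tau)=f(k)\ge f(1)=j(x,t_0)\ge c_0$, that is, $j(x,\tau)\ge c_0|t_0|^{-\nu}|\tau|^\nu$ for a.e.\ $x$. Running the identical argument for the opposite sign, taking $l$ to be the smaller of the two resulting constants and replacing $M$ by $|t_0|$, yields $j(x,t)\ge l|t|^\nu$ for a.e.\ $x\in\Omega$ and all $|t|>M$. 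The main obstacle I expect is the justification of the nonsmooth chain rule together with the claim that the a.e.\ derivative of $k\mapsto j(x,kt_0)$ genuinely lands in $t_0\,\partial j(x,kt_0)$; once this and the passage from $f'\ge0$ a.e.\ to monotonicity (via absolute continuity) are secured, the remainder is bookkeeping.
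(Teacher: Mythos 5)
Your proposal is correct and takes essentially the same approach as the paper: part (a) is the same routine Lipschitz product/composition estimate, and part (b) uses the identical key mechanism --- the inequality $\nu j(x,s)\leqslant \xi s$ for all $\xi\in\partial j(x,s)$ and $|s|>M$, extracted from $H(j)_2(v)$, forces $k\mapsto k^{-\nu}j(x,kt)$ to be nondecreasing on $[1,\infty)$, whence $j(x,kt)\geqslant k^{\nu}j(x,t)$ and then $j(x,t)\geqslant l|t|^{\nu}$ with $l>0$ coming from $\essinf j(\cdot,\pm t_0)>0$. The only divergence is how the monotonicity is certified: the paper applies the Lebourg mean value theorem to $f$ directly, while you use Rademacher differentiability, the chain-rule inclusion of the a.e.\ derivative in $t_0\,\partial j(x,kt_0)$, and absolute continuity to integrate $f'\geqslant 0$; both routes are sound.
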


\begin{proof}
  Suppose that $U$ is a bounded set in $\mathbb{R}_+\backslash\{0\}$. From $H(j)(ii)$, we know that there exists $L>0$ such that 
\begin{equation}\label{new}
  |j(x,u_1)-j(x,u_2)| \leqslant L|u_1-u_2|, \; \textrm{ for all } \; u_1,u_2 \in U.
\end{equation}

  Now, let us fix $t \in \mathbb{R}$ and $x \in \Omega$. For some $k_1,k_2 \in U$, we have
\begin{eqnarray*}
  |f(k_1)-f(k_2)|=\Big| \frac{1}{k_1^{\nu}} j(x, k_1 t)-\frac{1}{k_2^{\nu} }j(x, k_2 t)\Big|\\
  \leqslant \frac{1}{k_1^{\nu}} |j(x, k_1 t)-j(x,k_2 t)|+\frac{|k_2^{\nu}j(x,k_2 t)|-k_1^{\nu} j(x, k_2 t)|}{k_1^{\nu}k_2^{\nu}}\\
=\frac{1}{k_1^{\nu}} |j(x, k_1 t)-j(x,k_2 t)|+\frac{|j(x,k_2 t)|}{k_1^{\nu}k_2^{\nu}} |k_2^{\nu}-k_1^{\nu}|.
\end{eqnarray*}

  Using (\ref{new}), we obtain 
\begin{eqnarray*}
  |f(k_1)-f(k_2)|\leqslant \qquad \qquad \qquad \qquad \qquad \qquad\\
\frac{L}{k_1^{\nu}}|k_1-k_2||t|+\frac{|j(x,k_2t|)}{k_1^{\nu}k_2^{\nu}}|k_1-k_2| \big| k_1^{\nu -1}k_2+k_1^{\nu-2}k_2^2+\ldots+k_1k_2^{\nu-1}\big|.
\end{eqnarray*}

  Since U is bounded, so we have
\[
  |f(k_1)-f(k_2)|\leqslant s|k_1-k_2|,
\]
  for some $s=s(x,t)$. This implies that the function $f$ is locally Lipschitz on $\mathbb{R}_+\backslash\{0\}$.\\

  Moreover, when we consider the subdifferential in the sense of Clarke of the function $f$, we obtain
\[
  \partial f(k)=\partial \Big( \frac{1}{k^{\nu}} j(x, kt)\Big) \subseteq \frac{-\nu}{k^{\nu+1}} j(x, kt) + \frac{1}{k^{\nu}} \partial j(x, kt)t,
\]
  for all $k \in \mathbb{R}_+\backslash\{0\}$. By virtue of the Lebourg mean value theorem for locally Lipschitz functions, for $k>1$ we can choose $\xi \in (1, k)$, such that
\begin{eqnarray} \label{vv}
f(k)-f(1) \in \Big[ \frac{-\nu}{\xi^{\nu+1}} j(x, \xi t) + \frac{1}{\xi^{\nu}} \partial j(x, \xi t)t \Big](k-1)\nonumber \\
=\frac{k-1}{\xi^{\nu+1}} (-\nu j(x, \xi t)+\partial j(x, \xi t) \xi t),
\end{eqnarray}
  for all $x \in \Omega$ and all $t$ such that $|t| > M$. From definition of subdifferential in the sense of Clarke, we have
\[
  \langle \eta, -\xi t\rangle \leqslant j^0(x, \xi t; -\xi t), \; \textrm{ for all } \eta \in \partial j(x, \xi t).
\]

  Combinig this with (\ref{vv}) and using $H(j)_2 (v)$, we obtain
\[
  \frac{k-1}{\xi^{\nu+1}} (-\nu j(x, \xi t)+\partial j(x, \xi t) \xi t) \geqslant \frac{k-1}{\xi^{\nu+1}} (-\nu j(x, \xi t)-j^0 (x, \xi t; -\xi t)) \geqslant 0.
\]

  Thus from (\ref{vv}), it follows that $f(k)-f(1)=\frac{1}{k^{\nu}} j(x,kt)-j(x,t) \geqslant 0$, and so
\[
  k^{\nu} j(x,t) \leqslant j(x, kt),
\]
  for all $x \in \Omega$, all $n \geqslant 1$ and $t$ such that $|t| > M.$

  Therefore,
\[
  j(x,t)=j(x,\frac{t}{M} M) \geqslant \frac{t^{\nu}}{M^{\nu}} j(x,M), \; \textrm{ for } t > M,
\]
  and
\[
  j(x,t)=j(x,\frac{t}{M} (-M)) \geqslant \frac{t^{\nu}}{M^{\nu}} j(x,-M), \; \textrm{ for } t < -M.
\]

  It follows that 
\[
  j(x,t) \geqslant l|t|^{\nu},
\]
  where $l=\frac{1}{M^{\nu}} \{ \textrm{ ess\;inf }j(\cdot, M), \textrm{ ess\;inf } j(\cdot, -M) \}>0$, for almost all $x \in \Omega$ and all $t$ such that $|t|>M$.

\end{proof}

   We introduce locally Lipschitz functional $R: W_0^{1,p(x)}(\Omega) \rightarrow \mathbb{R}$ defined by
\[
  R(u)=\int_{\Omega} \frac{1}{p(x)}|\nabla u(x)|^{p(x)}dx -\int_{\Omega}  
  \frac{\lambda}{p(x)}|u(x)|^{p(x)}dx - \int_{\Omega} j(x,u(x))dx,
\]
  for all $u \in W_0^{1,p(x)}(\Omega)$.

\begin{lemma}\label{PS}
If hypotheses $H(j)$ and $H(j)_1$ hold, and $\lambda \in (-\infty, \frac{(p^- -1)p^+}{(p^+-1)p^-}\lambda_*)$, then $R$ satisfies the nonsmooth PS-condition.
\end{lemma}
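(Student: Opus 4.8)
The plan is to follow the classical three-step scheme for verifying the nonsmooth Palais--Smale condition: first show that every PS-sequence is bounded in $W_0^{1,p(x)}(\Omega)$, then pass to a weakly convergent subsequence, and finally upgrade weak to strong convergence by invoking the $(S_+)$-property of the operator $A$ from Lemma \ref{lemma3}. So let $\{u_n\}_{n\geq1}\subseteq W_0^{1,p(x)}(\Omega)$ be a sequence with $|R(u_n)|\leq M_1$ for some $M_1>0$ and $m(u_n)\to0$. For each $n$ choose $u_n^*\in\partial R(u_n)$ with $\|u_n^*\|_*=m(u_n)\to0$. Since the first two integrals defining $R$ are $\mathcal{C}^1$ (with derivatives $A$ and $v\mapsto\lambda|v|^{p(x)-2}v$, respectively) and, by the Aubin--Clarke theorem together with $H(j)(iii)$ and the compact embedding $W_0^{1,p(x)}(\Omega)\hookrightarrow L^{r(x)}(\Omega)$ (Lemma \ref{fan}(a)), the map $u\mapsto\int_\Omega j(x,u)\,dx$ is locally Lipschitz with generalized gradient contained in the set of $w\in L^{r'(x)}(\Omega)$ satisfying $w(x)\in\partial j(x,u(x))$ a.e., I may write $u_n^*=Au_n-\lambda|u_n|^{p(x)-2}u_n-w_n$ with $w_n(x)\in\partial j(x,u_n(x))$ a.e. Writing $\rho(u)=\int_\Omega|\nabla u|^{p(x)}\,dx$ and $\sigma(u)=\int_\Omega|u|^{p(x)}\,dx$, the smallness of $u_n^*$ gives $|\langle u_n^*,u_n\rangle|\leq\varepsilon_n\|u_n\|$ with $\varepsilon_n\to0$.

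The heart of the argument---and the step I expect to be the main obstacle---is boundedness, where both the variable exponent and the sign of $\lambda$ must be handled. I would form the combination
\[
\langle u_n^*,u_n\rangle-R(u_n)=\int_\Omega\Big(1-\tfrac1{p(x)}\Big)|\nabla u_n|^{p(x)}\,dx-\lambda\int_\Omega\Big(1-\tfrac1{p(x)}\Big)|u_n|^{p(x)}\,dx+\int_\Omega\big(j(x,u_n)-w_nu_n\big)\,dx,
\]
whose left-hand side is bounded by $\varepsilon_n\|u_n\|+M_1$. The Tang-type hypothesis $H(j)_1(v)$ gives $w_nu_n-j(x,u_n)\leq-c|u_n|^{p(x)}$ for $|u_n|$ large, which (after absorbing the bounded contribution of $\{|u_n|\leq M\}$, using $H(j)(iii)$ and $j(x,0)=0$) makes $\int_\Omega(j-w_nu_n)\,dx$ bounded below by a constant. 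For the gradient term I use $1-\tfrac1{p(x)}\geq\tfrac{p^--1}{p^-}$, and for the $\lambda$-term, when $\lambda>0$, the bound $1-\tfrac1{p(x)}\leq\tfrac{p^+-1}{p^+}$ together with the defining inequality $\lambda_*\sigma(u_n)\leq\rho(u_n)$ from (\ref{lambd}) (note $\lambda_*>0$ necessarily in this case); this yields the lower bound $\big(\tfrac{p^--1}{p^-}-\tfrac{\lambda(p^+-1)}{p^+\lambda_*}\big)\rho(u_n)$, whose coefficient is strictly positive precisely because $\lambda<\tfrac{(p^--1)p^+}{(p^+-1)p^-}\lambda_*$. (When $\lambda\leq0$ the $\lambda$-term is nonnegative and may simply be dropped.) In either case I arrive at $\delta\,\rho(u_n)\leq\varepsilon_n\|u_n\|+C$ with $\delta>0$. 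If $\|u_n\|\to\infty$ along a subsequence, then $\|\nabla u_n\|_{p(x)}\to\infty$ (an equivalent norm by Poincar\'e, Lemma \ref{fan}(b)), so $\rho(u_n)\geq\|\nabla u_n\|_{p(x)}^{p^-}$ by Lemma \ref{lemma2}(c); since $p^->1$ and $\varepsilon_n\to0$, dividing by $\|\nabla u_n\|_{p(x)}^{p^-}$ forces $\delta\leq0$, a contradiction. Hence $\{u_n\}$ is bounded.

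By reflexivity I may pass to a subsequence with $u_n\rightharpoonup u$ in $W_0^{1,p(x)}(\Omega)$, and by the compact embedding of Lemma \ref{fan}(a), $u_n\to u$ strongly in $L^{r(x)}(\Omega)$ and in $L^{p(x)}(\Omega)$. To conclude I test $u_n^*$ against $u_n-u$:
\[
\langle Au_n,u_n-u\rangle=\langle u_n^*,u_n-u\rangle+\lambda\int_\Omega|u_n|^{p(x)-2}u_n(u_n-u)\,dx+\int_\Omega w_n(u_n-u)\,dx.
\]
The first term tends to $0$ since $\|u_n^*\|_*\to0$ and $\{u_n-u\}$ is bounded. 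For the remaining two I note that $\{|u_n|^{p(x)-2}u_n\}$ is bounded in $L^{p'(x)}(\Omega)$ (its modular equals $\sigma(u_n)$) and, by $H(j)(iii)$, $\{w_n\}$ is bounded in $L^{r'(x)}(\Omega)$; combined with the strong convergences $u_n-u\to0$ in $L^{p(x)}(\Omega)$ and $L^{r(x)}(\Omega)$, the generalized H\"older inequality (Lemma \ref{fan}(c)) sends both integrals to $0$. Therefore $\limsup_{n\to\infty}\langle Au_n,u_n-u\rangle\leq0$, and the $(S_+)$-property of $A$ (Lemma \ref{lemma3}) yields $u_n\to u$ strongly in $W_0^{1,p(x)}(\Omega)$, establishing the nonsmooth PS-condition.
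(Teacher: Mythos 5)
Your proof is correct and shares the paper's skeleton: the same Aubin--Clarke decomposition $u_n^*=Au_n-\lambda|u_n|^{p(x)-2}u_n-w_n$, the same basic combination of the bounds $|R(u_n)|\leq M$ and $\langle u_n^*,u_n\rangle$ small (the paper's (\ref{1111}) added to (\ref{236k}) is exactly your quantity $\langle u_n^*,u_n\rangle-R(u_n)$), and the identical closing step via $\limsup_{n\to\infty}\langle Au_n,u_n-u\rangle\leq0$ and the $(S_+)$-property of Lemma \ref{lemma3}. The genuine difference is in how that combination is exploited to get boundedness. The paper argues in two cases and in two stages: it retains the quantitative decay $v^*(x)t-j(x,t)\leq-\frac{c}{2}|t|^{p(x)}+\beta$ coming from $H(j)_1(v)$ and uses it to bound the modular $\int_\Omega|u_n(x)|^{p(x)}dx$ first --- for $\lambda\leq0$ the positive coefficient is supplied entirely by $\frac{c}{2}$, while for $\lambda>0$ the gradient modular is absorbed into the $L^{p(x)}$-modular via $\lambda_*\int_\Omega|u_n|^{p(x)}dx\leq\int_\Omega|\nabla u_n|^{p(x)}dx$ --- and only afterwards bounds the gradient modular. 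You instead use $H(j)_1(v)$ only in the weaker, qualitative form that $\int_\Omega\big(j(x,u_n)-w_nu_n\big)dx$ is bounded below, absorb the $\lambda$-term into the gradient modular (the same $\lambda_*$-inequality applied in the opposite direction, with the coefficient positive precisely under the stated restriction on $\lambda$), and close with a contradiction argument resting on Poincar\'e and $p^->1$. Your route is more economical with the hypothesis, treats both signs of $\lambda$ uniformly, and is in fact more careful on one point: the paper's (\ref{syl}) asserts $|\langle u_n^*,w\rangle|\leq\varepsilon_n$ uniformly in $w$, whereas before boundedness is known one only has $|\langle u_n^*,u_n\rangle|\leq\varepsilon_n\|u_n\|$; your division by $\|\nabla u_n\|_{p(x)}^{p^-}$ is exactly what neutralizes this term rigorously. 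What the paper's quantitative use of $c$ buys in exchange is a direct, explicit a priori bound on both modulars, with no argument by contradiction.
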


\begin{proof}

  Let $\{u_n\}_{n \geq 1} \subseteq W_0^{1,p(x)}(\Omega)$ be a sequence such that $\{R(u_n)\}_{n \geq 1}$ is bounded and $m(u_n) \rightarrow 0$ as $n \rightarrow \infty.$
  We will show that $\{u_n\}_{n \geq 1} \subseteq W_0^{1,p(x)}(\Omega)$ is bounded.

  Because $|R(u_n)|\leq M$ for all $n \geq 1$, we have
\begin{equation} \label{1111}
-M \leqslant \int_{\Omega} \frac{1}{p(x)} |\nabla u_n (x)|^{p(x)} dx-\int_{\Omega} \frac{\lambda}{p(x)}|u_n (x)|^{p(x)}dx - \int_{\Omega} j(x,u_n (x))dx.
\end{equation}

 Since $\partial R(u_n) \subseteq (W_0^{1,p(x)}(\Omega))^*$ is weakly compact, nonempty and the norm functional is weakly lower semicontinuous in a Banach space, then we can find $u_n^* \in \partial R(u_n)$ such that $||u_n^*||_*=m(u_n)$, for $n \geq 1$.

  Consider the operator $A:W_0^{1,p(x)}(\Omega) \rightarrow (W_0^{1,p(x)}(\Omega))^*$  defined by (\ref{operator}). Then, for every $n \geq 1$, we have
\begin{equation}\label{11ss}
u_n^*=Au_n-\lambda |u_n|^{p(x)-2} u_n - v_n^*,
\end{equation}
  where $v_n^* \in \partial \psi (u_n)\subseteq L^{p'(x)} (\Omega)$, for $n \geq 1$, with $\frac{1}{p(x)}+\frac{1}{p'(x)}=1$ and $\psi: W_0^{1,p(x)}(\Omega) \rightarrow \mathbb{R}$ is defined by $\psi (u_n)=\int\limits_{\Omega} j(x,u_n(x)) dx$. We know that, if $v_n^* \in \partial \psi (u_n)$, then $v_n^*(x) \in \partial j(x, u_n(x))$ (see Clarke \cite{Clarke}). 

  From the choice of the sequence $\{u_n\}_{n \geq 1} \subseteq W_0^{1,p(x)}(\Omega)$, at least for a subsequence, we have
\begin{equation}\label{syl}
|\langle u_n^*,w \rangle| \leq \varepsilon_n \quad \textrm{for all } w \in W^{1,p(x)}_0(\Omega),
\end{equation}
  with $\varepsilon_n \searrow 0$.

  Putting $w=u_n$ in (\ref{syl}) and using (\ref{11ss}), we obtain
\begin{equation} \label{236k}
-\varepsilon_n \leqslant - \int_{\Omega} |\nabla u_n(x)|^{p(x)} dx +\lambda \int_{\Omega} |u_n(x)|^{p(x)} dx +\int_{\Omega} v_n^* (x)u_n(x) dx.
\end{equation}
  
\bigskip

Now, let us consider two cases.\\

\noindent \textit{Case $1$. }

Let $\lambda \leqslant 0$. We define $\lambda_-:=\max \{0,-\lambda\}$.

  From (\ref{1111}) and (\ref{236k}), we have
\begin{eqnarray} \label{568}
-M-\varepsilon_n &\leqslant&  \Big( \frac{1}{p^-}-1 \Big) \int_{\Omega}|\nabla u_n (x)|^{p(x)} dx+ \lambda_- \Big( \frac{1}{p^-}-1\Big)\int_{\Omega} |u_n (x)|^{p(x)}dx \nonumber\\
&&\qquad \qquad +\int_{\Omega} v_n^* (x)u_n(x) dx - \int_{\Omega} j(x,u_n (x))dx.
\end{eqnarray}

  So we obtain that
\begin{eqnarray} \label{335}
 \lambda_- \Big( 1-\frac{1}{p^-}\Big) \int_{\Omega} |u_n (x)|^{p(x)}dx 
\leqslant \quad \quad \nonumber \\
M+\varepsilon_n +\int_{\Omega} v_n^* (x)u_n(x) dx - \int_{\Omega} j(x,u_n (x))dx.
\end{eqnarray}

  By virtue of hypotheses $H(j)_1(v)$, we know that there exist constant $c>0$, such that  
\[
\limsup\limits_{|t|\rightarrow \infty} \frac{v^*(x)t-j(x,t)}{|t|^{p(x)}} \leqslant -c,
\]
  uniformly for almost all $x \in \Omega$. So in particularly, there exists $L>0$ such that for almost all $x \in \Omega$ and all $|t| \geq L$, we have
\[
\frac{v^*(x)t-j(x,t)}{|t|^{p(x)}} \leqslant -\frac{c}{2}.
\]
  It immediately follows that
\begin{equation} \label{waznee}
v^*(x)t-j(x,t) \leqslant -\frac{c}{2}|t|^{p(x)}.
\end{equation}

  On the other hand, from the Lebourg mean value theorem (see Clarke \cite{Clarke}), for almost al $x \in \Omega$ and all $t \in \mathbb{R}$, we  can find $v(x) \in \partial j(x, k u(x))$ with $0<k<1$, such that
\[
|j(x,t)-j(x,0)| \leq |v(x)||t|.
\]

  So from hypothesis $H(j)(iii)$, for almost all $x \in \Omega$, we have
\[ \label{210}
|j(x,t)| \leq a(x)|t|+c_1|t|^{r(x)} \leq a(x)|t|+c_1|t|^{r^+}+c_2,
\]
  for some $c_2>0$. Then for almost all $x \in \Omega$ and all $t$ such that $|t|<L$, through (\ref{210}), it follows that
\begin{equation} \label{123}
|j(x,t)| \leq c_3,
\end{equation}
  for some $c_3>0$. Therefore, from (\ref{waznee}) and (\ref{123}) it follows that for almost all $x \in \Omega$ and all $ t \in \mathbb{R}$, we have
\begin{equation} \label{2040}
v^*(x)t-j(x,t) \leqslant -\frac{c}{2}|t|^{p(x)}+\beta,
\end{equation}
  for some $\beta>0.$

  We use (\ref{2040}) in (\ref{335}) and obtain
\[
 \lambda_- \Big( 1-\frac{1}{p^-}\Big) \int_{\Omega} |u_n (x)|^{p(x)}dx 
\leqslant M+\varepsilon_n - \frac{c}{2}\int_{\Omega}|u_n(x)|^{p(x)} dx + \int_{\Omega} \beta dx,
\]
  for all $n \geq 1$, which leads to
\[
 \Big[\lambda_- \Big( 1-\frac{1}{p^-} \Big) + \frac{c}{2} \Big] \int_{\Omega} |u_n (x)|^{p(x)}dx 
\leqslant M_1,
\]
  for some $M_1>0$.

  We know that $ \lambda_- \Big( 1-\frac{1}{p^-} \Big) + \frac{c}{2}>0$, so
\begin{equation}\label{ogr}
\textrm{the sequence } \{u_n\}_{n \geq 1} \subseteq L^{p(x)} (\Omega) \textrm{ is bounded}
\end{equation}
(see Lemma \ref{lemma2} (c) and (d)).\\

  Now, consider again (\ref{568}). We obtain
\[
 \Big( 1-\frac{1}{p^-}\Big) \int_{\Omega} |\nabla u_n (x)|^{p(x)}dx 
\leqslant M+\varepsilon_n +\int_{\Omega} v_n^* (x)u_n(x) dx - \int_{\Omega} j(x,u_n (x))dx.
\]
  In a similar way, by using (\ref{2040}) we have
\[
 \Big( 1-\frac{1}{p^-}\Big) \int_{\Omega} |\nabla u_n (x)|^{p(x)}dx 
\leqslant  M+\varepsilon_n - \frac{c}{2} \int_{\Omega} |u_n(x)|^{p(x)} dx+\int_{\Omega} \beta dx,
\]
  for all $n \geq 1$.

  So we obtain
\[
 \Big( 1-\frac{1}{p^-}\Big) \int_{\Omega} |\nabla u_n (x)|^{p(x)}dx 
\leqslant  M+\varepsilon_n+\int_{\Omega} \beta dx:=M_2,
\]

  Because $\Big( 1-\frac{1}{p^-}\Big)>0$, we have that
\begin{equation}\label{ogr1}
\textrm{the sequence } \{\nabla u_n\}_{n \geq 1} \subseteq L^{p(x)} (\Omega;\mathbb{R}^N) \textrm{ is bounded}
\end{equation}
  (see Lemma \ref{lemma2} (c) and (d)).

  From (\ref{ogr}) and (\ref{ogr1}), we have that
\begin{equation}\label{ogrs1}
\textrm{the sequence } \{u_n\}_{n \geq 1} \subseteq W_0^{1,p(x)} (\Omega) \textrm{ is bounded}
\end{equation}
  (see Lemma \ref{lemma6} (c) and (d)).\\

\noindent \textit{Case $2$. }

  Now, let $\lambda > 0$. 

  Again from (\ref{1111}) and (\ref{236k}), we have
\begin{eqnarray} \label{890}
-M-\varepsilon_n &\leqslant&  \Big( \frac{1}{p^-}-1 \Big) \int_{\Omega}|\nabla u_n (x)|^{p(x)} dx+ \lambda \Big(1- \frac{1}{p^+}\Big)\int_{\Omega} |u_n (x)|^{p(x)}dx \nonumber\\
&&\qquad \qquad +\int_{\Omega} v_n^* (x)u_n(x) dx - \int_{\Omega} j(x,u_n (x))dx.
\end{eqnarray}

  From the definition of $\lambda_*$ (see (\ref{lambd})), we have
\begin{equation} \label{lambdaa}
\lambda_* \int_{\Omega} |u_n(x)|^{p(x)}dx \leq \int_{\Omega} |\nabla u_n(x)|^{p(x)} dx,
\end{equation}
  for all $n \geq 1$.

  Using this fact in (\ref{890}), we have
\begin{eqnarray} \label{459}
\Big[ \lambda_* \Big( 1-\frac{1}{p^-} \Big) + \lambda \Big( \frac{1}{p^+}-1\Big)\Big]\int_{\Omega} |u_n (x)|^{p(x)}dx \leqslant \nonumber\\
 M+\varepsilon_n +\int_{\Omega} v_n^* (x)u_n(x) dx - \int_{\Omega} j(x,u_n (x))dx.
\end{eqnarray}

  In a similar way like in Case $1$, by using (\ref{2040}) in (\ref{459}), we obtain
 \[ 
\Big[ \lambda_* \Big( 1-\frac{1}{p^-} \Big) + \lambda \Big( \frac{1}{p^+}-1\Big) +\frac{c}{2}\Big] \int_{\Omega} |u_n (x)|^{p(x)}dx \leqslant M_3,
\]
  for some $M_3>0$.

  We know that 
\[ \lambda_* \Big( 1-\frac{1}{p^-} \Big) + \lambda \Big( \frac{1}{p^+}-1\Big) +\frac{c}{2}>0\]
  (see (\ref{lamb2})), so
\begin{equation}\label{ogr2}
  \textrm{the sequence } \{u_n\}_{n \geq 1} \subseteq L^{p(x)} (\Omega) \textrm{ is bounded}
\end{equation}
(see Lemma \ref{lemma2} (c) and (d)).

  Now, again from (\ref{890}), we have
\begin{eqnarray}\label{338899} 
\Big( 1-\frac{1}{p^-} \Big) \int_{\Omega}|\nabla u_n (x)|^{p(x)} dx \leqslant  M+\varepsilon_n+ \lambda \Big(1- \frac{1}{p^+}\Big)\int_{\Omega} |u_n (x)|^{p(x)}dx \nonumber \nonumber\\
\int_{\Omega} v_n^* (x)u_n(x) dx - \int_{\Omega} j(x,u_n (x))dx.
\end{eqnarray}
  Using (\ref{2040}) and (\ref{ogr2}) in (\ref{338899}), we obtain
\[
\Big( 1-\frac{1}{p^-} \Big) \int_{\Omega}|\nabla u_n (x)|^{p(x)} dx \leqslant  M_4,
\]
  for some $M_4>0$.

  Because $ 1-\frac{1}{p^-}>0$, we have that
\begin{equation}\label{ogr3}
\textrm{the sequence } \{\nabla u_n\}_{n \geq 1} \subseteq L^{p(x)} (\Omega;\mathbb{R}^N) \textrm{ is bounded}
\end{equation}
 (see Lemma \ref{lemma2} (c) and (d)).

  From (\ref{ogr2}) and (\ref{ogr3}), we have that
\[
\textrm{the sequence } \{u_n\}_{n \geq 1} \subseteq W_0^{1,p(x)} (\Omega) \textrm{ is bounded}
\]
(see Lemma \ref{lemma6} (c) and (d)).\\

 From Cases $1$ and $2$, we have that 
\begin{equation}\label{ogsuma}
\textrm{the sequence } \{u_n\}_{n \geq 1} \subseteq W_0^{1,p(x)} (\Omega) \textrm{ is bounded}.
\end{equation}

  Hence, by passing to a subsequence if necessary, we may assume that
\begin{equation}\label{1}
  \left.
  \begin{array}{ll}
   u_n \rightarrow u & \textrm{weakly in } W_0^{1,p(x)} (\Omega),\\
   u_n \rightarrow u & \textrm{in } L^{r(x)}(\Omega),
  \end{array}
  \right.
\end{equation}
  for any $r \in \mathcal{C}(\overline{\Omega})$, with $r^+=\max\limits_{x \in \Omega} r(x)< {\hat{p}}^*:=\frac{Np^-}{N-p^-}.$ 

  Putting $w=u_n-u$ in (\ref{syl}) and using (\ref{11ss}), we obtain
\begin{equation} \label{47}
  \Big|\langle Au_n,u_n-u\rangle - \lambda \int_{\Omega}  |u_n(x)|^{p(x)-2}u_n(x)(u_n-u)(x)dx -\int_{\Omega} v_n^*(x) (u_n-u)(x)dx\Big|\leq \varepsilon_n,
\end{equation}
  with $\varepsilon_n \searrow 0$.

  Using Lemma \ref{fan}(c), we see that
\begin{eqnarray*}
  &           & \lambda \int_{\Omega} |u_n(x)|^{p(x)-2}u_n(x)(u_n-u)(x)dx\cr
  & \leqslant & \lambda \Big(\frac{1}{p^-}+\frac{1}{p'^-}\Big) \|\, |u_n|^{p(x)-1}\|_{p'(x)}\|u_n-u\|_{p(x)},
\end{eqnarray*}
   where $\frac{1}{p(x)}+\frac{1}{p'(x)}=1$.

  We know that the sequence $\{u_n\}_{n \geqslant 1} \subseteq L^{p(x)}(\Omega)$
  is bounded, so using (\ref{1}), we can conclude that
\[
  \lambda \int_{\Omega} |u_n(x)|^{p(x)-2}u_n(x)(u_n-u)(x)dx \rightarrow 0 \quad \textrm{as } n \rightarrow \infty
\]
  and
\[
  \int_{\Omega} v_n^*(x) (u_n-u)(x)dx \rightarrow 0 \quad \textrm{as } n \rightarrow \infty.
\]
 If we pass to the limit as $n \rightarrow \infty$ in (\ref{47}), we have
\begin{equation} \label{68}
  \limsup\limits_{n \rightarrow \infty} \langle Au_n, u_n-u \rangle \leq 0.
\end{equation}

  So from Lemma \ref{lemma3}, we have that $u_n \rightarrow u$ in $W^{1,p(x)}_0(\Omega)$ as $ n \rightarrow \infty$. Thus $R$ satisfies the  (PS)-condition.
\end{proof}

\begin{lemma}\label{PS2}
If hypotheses $H(j)$ and $H(j)_2$ hold, then $R$ satisfies the nonsmooth PS-condition.
\end{lemma}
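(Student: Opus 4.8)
The plan is to run the argument along the same three stages as Lemma \ref{PS}: show first that a sequence $\{u_n\}$ with $\{R(u_n)\}$ bounded and $m(u_n)\to 0$ is bounded in $W_0^{1,p(x)}(\Omega)$, then extract a weakly convergent subsequence, and finally use the $(S_+)$-property of $A$ (Lemma \ref{lemma3}) to pass from weak to strong convergence. The second and third stages are verbatim the same as in Lemma \ref{PS}: they rely only on the growth bound $H(j)(iii)$, H\"older's inequality (Lemma \ref{fan}(c)), the compact embedding (\ref{1}) and the chain (\ref{47})--(\ref{68}), none of which distinguish $H(j)_1$ from $H(j)_2$. Hence the entire novelty sits in the boundedness step, where the Tang/Ambrosetti--Rabinowitz condition $H(j)_2(v)$ must play the role that the asymptotic estimate (\ref{war}) played under $H(j)_1$.

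For the boundedness I would choose $u_n^* \in \partial R(u_n)$ with $\|u_n^*\|_*=m(u_n)$ and write, as in (\ref{11ss}), $u_n^*=Au_n-\lambda|u_n|^{p(x)-2}u_n-v_n^*$ with $v_n^*(x)\in\partial j(x,u_n(x))$. The decisive move is to test not with $u_n$ alone but to form the combination $\nu R(u_n)-\langle u_n^*,u_n\rangle$, which after the cancellations equals
\[
\int_\Omega\Big(\frac{\nu}{p(x)}-1\Big)|\nabla u_n|^{p(x)}\,dx-\lambda\int_\Omega\Big(\frac{\nu}{p(x)}-1\Big)|u_n|^{p(x)}\,dx+\int_\Omega\big(v_n^*(x)u_n(x)-\nu j(x,u_n(x))\big)\,dx.
\]
Since $\nu>p^+\ge p(x)$, the weight $\frac{\nu}{p(x)}-1$ is bounded below by $\frac{\nu}{p^+}-1>0$. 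For the last integral I would invoke $H(j)_2(v)$ together with the elementary consequence $v^*(x)t\ge-j^0(x,t;-t)\ge\nu j(x,t)$ of the definition of the Clarke subdifferential (cf. Remark \ref{uwaga}(ii)): this makes the integrand $v_n^*u_n-\nu j(x,u_n)$ nonnegative on $\{|u_n|>M\}$, whereas on $\{|u_n|\le M\}$ both $j$ and $v_n^*u_n$ are controlled by $H(j)(iii)$ (as in (\ref{123})), so that the integral is bounded below by a constant $-C$. Because $R(u_n)$ is bounded and $|\langle u_n^*,u_n\rangle|\le\varepsilon_n$ with $\varepsilon_n\searrow0$, the left-hand side of the displayed identity is bounded above.

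I would then split into the two cases as in Lemma \ref{PS}. If $\lambda\le 0$ the middle term is nonnegative and may simply be dropped, leaving $(\frac{\nu}{p^+}-1)\int_\Omega|\nabla u_n|^{p(x)}\,dx\le\textrm{const}$; by Lemma \ref{lemma2}(c),(d) and the Poincar\'e inequality (Lemma \ref{fan}(b)) this bounds $\{u_n\}$ in $W_0^{1,p(x)}(\Omega)$ via Lemma \ref{lemma6}. If $\lambda>0$ I would estimate $\frac{\nu}{p(x)}-1\le\frac{\nu}{p^-}-1$ in the middle term and absorb it into the gradient term by (\ref{lambdaa}), $\lambda_*\int_\Omega|u_n|^{p(x)}\,dx\le\int_\Omega|\nabla u_n|^{p(x)}\,dx$, obtaining
\[
\Big[\Big(\frac{\nu}{p^+}-1\Big)-\frac{\lambda}{\lambda_*}\Big(\frac{\nu}{p^-}-1\Big)\Big]\int_\Omega|\nabla u_n|^{p(x)}\,dx\le\textrm{const};
\]
here the restriction on $\lambda$ from (\ref{lamb2}) is what must keep the bracket strictly positive, again yielding boundedness.

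With boundedness secured, the proof closes exactly as in Lemma \ref{PS}: along a subsequence $u_n\rightharpoonup u$ in $W_0^{1,p(x)}(\Omega)$ and $u_n\to u$ in $L^{r(x)}(\Omega)$ by (\ref{1}); testing $u_n^*$ against $u_n-u$ and using H\"older (Lemma \ref{fan}(c)) with the bound $H(j)(iii)$ makes the $\lambda$- and $v_n^*$-integrals vanish, so $\limsup_{n\to\infty}\langle Au_n,u_n-u\rangle\le0$, and Lemma \ref{lemma3} forces $u_n\to u$ strongly. I expect the main obstacle to be precisely the boundedness for $\lambda>0$: one must test with the multiplier $\nu$ (not $1$) so that the superlinearity in $H(j)_2(v)$ produces a term of the right sign, and then verify that absorbing the indefinite $\lambda$-term through $\lambda_*$ leaves a positive coefficient on $\int_\Omega|\nabla u_n|^{p(x)}\,dx$. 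That positivity check, driven by (\ref{lamb2}) and the relation $\nu>p^+$, is the only genuinely delicate point; the rest is a routine transcription of the $H(j)_1$ argument.
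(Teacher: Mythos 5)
Your reduction to the boundedness step is correct, and your stages two and three (weak convergence, compact embedding, the $(S_+)$-argument via Lemma \ref{lemma3}) do transcribe from Lemma \ref{PS} unchanged. The genuine gap is in your Case 2 ($\lambda>0$) of the boundedness step, exactly at the point you flag as ``delicate.'' By testing with the multiplier $\nu$ you make the potential terms combine into $\int_\Omega\big(v_n^*u_n-\nu j(x,u_n)\big)\,dx$, which you can only bound below by a constant: all of the superlinearity in $H(j)_2(v)$ is spent producing that sign, and none is left to fight the $\lambda$-term. You are then forced to absorb $\lambda\int_\Omega\big(\frac{\nu}{p(x)}-1\big)|u_n|^{p(x)}dx$ into the gradient term via (\ref{lambdaa}), and your bracket is positive precisely when
\[
\lambda<\lambda_*\,\frac{p^-(\nu-p^+)}{p^+(\nu-p^-)}.
\]
Whenever $p^-<p^+$ this threshold is \emph{strictly smaller} than $\frac{p^-}{p^+}\lambda_*$, and it tends to $0$ as $\nu\downarrow p^+$; since $\nu$ is dictated by the potential $j$ and may be arbitrarily close to $p^+$, the standing hypothesis (\ref{lamb2}) does not keep your bracket positive. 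Concretely, take $p^-=2$, $p^+=3$, $\nu=3.1$, $\lambda=0.6\,\lambda_*$: this satisfies (\ref{lamb2}) (here $\widetilde{p}=\tfrac{2}{3}$), yet your bracket equals $\big(\tfrac{3.1}{3}-1\big)-0.6\big(\tfrac{3.1}{2}-1\big)\approx-0.30<0$. Note also that the lemma asserts the PS-condition under $H(j)_2$ with no restriction tying $\lambda$ to $\nu$, so any argument whose constants degenerate as $\nu\downarrow p^+$ cannot close the proof as stated.

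The repair is to keep a surplus of superlinearity rather than spend it all, which is what the paper does. Multiply the bound $R(u_n)\leqslant M$ by $p^+$ (not $\nu$) before adding the inequality $-\langle u_n^*,u_n\rangle\leqslant\varepsilon_n$: the gradient terms then cancel outright, no absorption is needed, and the potential terms appear as
\[
-\int_\Omega\big[\nu j(x,u_n)+j^0(x,u_n;-u_n)\big]\,dx+(\nu-p^+)\int_\Omega j(x,u_n)\,dx .
\]
The first integral is bounded below by a constant exactly as in your argument, while the surplus $(\nu-p^+)\int_\Omega j\,dx$ is bounded below by $l\|u_n\|_\nu^\nu-C$ thanks to Lemma \ref{lemmakop}(b) --- the paper's key tool, which your proposal never invokes. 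Since $\nu>p^+\geqslant p(x)$, the term $\|u_n\|_\nu^\nu$ dominates the right-hand side term $\big(\lambda_-+\frac{\lambda_+p^+}{p^-}\big)\int_\Omega|u_n|^{p(x)}dx\leqslant C\big(1+\|u_n\|_\nu^{p^+}\big)$ for \emph{every} fixed $\lambda$, positive or negative, with no case distinction and no use of $\lambda_*$. This yields the $L^\nu$-bound, hence the $L^{p(x)}$-bound; a second combination, this time with multiplier $\nu$ as in your identity, then bounds the gradients, because the troublesome $\lambda$-term is by then already known to be bounded. After that, your concluding $(S_+)$-argument goes through verbatim.
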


\begin{proof}
 Let $\{u_n\}_{n \geq 1} \subseteq W_0^{1,p(x)}(\Omega)$ be a sequence such that $\{R(u_n)\}_{n \geq 1}$ is bounded and $m(u_n) \rightarrow 0$ as $n \rightarrow \infty.$
  We will show that $\{u_n\}_{n \geq 1} \subseteq W_0^{1,p(x)}(\Omega)$ is bounded.

  Because $|R(u_n)|\leq M$ for all $n \geq 1$, we have
\begin{equation}\label{ogrr}
\int_{\Omega} \frac{1}{p(x)} |\nabla u_n (x)|^{p(x)} dx-\int_{\Omega} \frac{\lambda}{p(x)}|u_n (x)|^{p(x)}dx - \int_{\Omega} j(x,u_n (x))dx \leqslant M,
\end{equation}
  so
\begin{equation} \label{1112}
\int_{\Omega} |\nabla u_n (x)|^{p(x)} dx-\frac{\lambda_+ p^+}{p^-}\int_{\Omega} |u_n (x)|^{p(x)}dx - p^+ \int_{\Omega} j(x,u_n (x))dx \leqslant p^+ M,
\end{equation}
  where $\lambda_+:=\max \{ \lambda,0 \}$.
  From the choice of the sequence $\{u_n\}_{n \geq 1} \subseteq W_0^{1,p(x)}(\Omega)$, at least for a subsequence, we have
\begin{equation}\label{489nn}
|\langle u_n^*,w \rangle| \leq \varepsilon_n \quad \textrm{for all } w \in W^{1,p(x)}_0(\Omega),
\end{equation}
  with $\varepsilon_n \searrow 0$ and $u_n^*$ is like in (\ref{11ss}) in Lemma \ref{PS}.

  Taking $w=u_n$ in (\ref{489nn}) and using (\ref{11ss}), we obtain
\begin{equation} \label{236l}
- \int_{\Omega} |\nabla u_n(x)|^{p(x)} dx +\lambda \int_{\Omega} |u_n(x)|^{p(x)} dx +\int_{\Omega} v_n^* (x)u_n(x) dx \leqslant \varepsilon_n.
\end{equation}

  From the definition of subdifferential in the sense of Clarke, we have
\[
  \langle v_n^*,-u_n\rangle \leqslant j^0(x,u_n;-u_n), \quad \textrm{ for some }v_n^* \in \partial j(x, u_n(x)).
\]
  It follows that
\[
\langle v_n^*,u_n\rangle \geqslant -j^0(x,u_n;-u_n),
\]
  for all $n \geqslant 1$. Using this fact in (\ref{236l}), we obtain
\begin{equation} \label{237}
  -\int_{\Omega} |\nabla u_n(x)|^{p(x)} dx -\lambda_- \int_{\Omega} |u_n(x)|^{p(x)} dx -\int_{\Omega} j^0(x,u_n(x);-u_n(x)) dx \leqslant \varepsilon_n,
\end{equation}
  where $\lambda_-:= \max \{ - \lambda,0 \}$.
 
  Adding (\ref{1112}) and (\ref{237}), we have
\begin{eqnarray*}
  -\Big(\lambda_-+\frac{\lambda_+ p^+}{p^-} \Big) \int_{\Omega} |u_n(x)|^{p(x)} dx-\int_{\Omega}p^+ j(x,u_n (x))+j^0(x,u_n(x);-u_n(x)) dx \\
 \leqslant \varepsilon_n +Mp^+.
\end{eqnarray*}

  Thus,
\begin{eqnarray}\label{wy}
  -\int_{\Omega}\nu j(x,u_n (x))+j^0(x,u_n(x);-u_n(x)) dx
+ (\nu-p^+) \int_{\Omega} j(x,u_n (x))dx \nonumber\\
\leqslant\varepsilon_n +Mp^++\Big(\lambda_-+\frac{\lambda_+ p^+}{p^-} \Big) \int_{\Omega} |u_n(x)|^{p(x)} dx,
\end{eqnarray}
  for all $x \in \Omega$ and all $n \geqslant 1$, where $\nu > p^+ >p(x)$.

  Now, let us consider
\begin{eqnarray}\label{440}
  - \int_{\Omega}\nu j(x,u_n (x))+j^0(x,u_n(x);-u_n(x)) dx= \nonumber\\
  - \int_{\{|u_n|> M\} }\nu j(x,u_n (x))+j^0(x,u_n(x);-u_n(x)) dx\\
  - \int_{\{|u_n|\leqslant M\}}\nu j(x,u_n (x))+j^0(x,u_n(x);-u_n(x)) dx \nonumber
\end{eqnarray}

  From $H(j)(iii)$ and the Lebourg mean value theorem, similarly as in Lemma \ref{PS}, we can show that for almost all $x \in \Omega$ and all $|u_n| \leqslant M$, there exist constant $K_1>0$ such that
\[
  \nu j(x,u_n (x))+j^0(x,u_n(x);-u_n(x)) \leqslant K_1.
\]
  So 
\begin{equation}\label{441}
  -\int_{\{|u_n| \leqslant M\}}\nu j(x,u_n (x))+j^0(x,u_n(x);-u_n(x)) dx \geqslant -K_1.
\end{equation}

  From $H(j)_2(v)$, we know that there exists constants $M, K_2>0$ such that 
\begin{equation}\label{442}
  -\int_{\{|u_n| > M\}}\nu j(x,u_n (x))+j^0(x,u_n(x);-u_n(x)) dx \geqslant -K_2,
\end{equation}
  for almost all $x \in \Omega$ and $|u_n|> M.$

  Using (\ref{441}) and (\ref{442}), we obtain 
\begin{equation}\label{kk1}
  -\int_{\Omega}\nu j(x,u_n (x))+j^0(x,u_n(x);-u_n(x)) dx > -K,
\end{equation}
  where $K:=K_1+K_2>0$.\\

  Now, let us consider
\begin{eqnarray} \label{rr}
  (\nu-p^+) \int_{\Omega} j(x,u_n (x))dx=(\nu-p^+) \Big(\int_{\{|u_n| > M\}} j(x,u_n (x))dx\nonumber \\
  +\int_{\{|u_n| \leqslant M\}} j(x,u_n (x))dx \Big).\
\end{eqnarray}

  Again by the use of the Lebourg mean value theorem, inequality (\ref{rr}) and Lemma \ref{lemmakop}, we have
\begin{eqnarray}\label{now3}
  (\nu-p^+) \int_{\Omega} j(x,u_n (x))dx \geqslant (\nu-p^+) \Big( l\int_{\Omega} |u_n(x)|^{\nu} dx - K_1 \Big)\nonumber\\
 \geqslant K_3(||u_n||_{\nu}^{\nu} -K_1),
\end{eqnarray}
  for some $l, K_1, K_3>0$ and all $n \geqslant 1$.\\

  Using (\ref{kk1}) and (\ref{now3}) in (\ref{wy}), we have 
\begin{eqnarray}
  -K+K_3(||u_n||_{\nu}^{\nu} -K_1) \leqslant \varepsilon_n +Mp^++\Big(\lambda_-+\frac{\lambda_+ p^+}{p^-} \Big) \int_{\Omega} |u_n(x)|^{p(x)} dx. \nonumber
\end{eqnarray}

  Since $\nu > p^+>p(x)$ for all $x \in \Omega$, so it follows that
\begin{equation} \label{oo1}
  \textrm{the sequence } \{u_n\}_{n \geq 1} \subseteq L^{\nu} (\Omega) \textrm{ is bounded}.
\end{equation}

  For any $n \geqslant 1$ such that $||u_n||_{p(x)} \leqslant 1$ we have 
\[
  ||u_n||_{p(x)}^{p^+}< \int_{\Omega} |u_n(x)|^{p(x)} dx < \int_{\Omega} |u_n(x)|^{p^-} dx < ||u_n||_{\nu}^{\nu} \leqslant K_4,
\]
for some $K_4>0$ (see Lemma \ref{lemma2}).

  On the other hand,for any $n \geqslant 1$ such that $||u_n||_{p(x)} > 1$, we have
\[
  ||u_n||_{p(x)}^{p^-}< \int_{\Omega} |u_n(x)|^{p(x)} dx < \int_{\Omega} |u_n(x)|^{p^+} dx < ||u_n||_{\nu}^{\nu} \leqslant K_4.
\]
  Thus
\begin{equation} \label{ss}
\textrm{the sequence } \{u_n\}_{n \geq 1} \subseteq L^{p(x)} (\Omega) \textrm{ is bounded}.
\end{equation}

  Now, consider again (\ref{ogrr}) and multiply it by $\nu > p^+$ to obtain
\begin{equation}\label{009}
  \int_{\Omega} \frac{\nu}{p^+} |\nabla u_n (x)|^{p(x)} dx-\int_{\Omega} \frac{\lambda_+ \nu}{p^-}|u_n (x)|^{p(x)}dx - \nu\int_{\Omega} j(x,u_n (x))dx \leqslant \nu M,
\end{equation}
  for some $M>0$ and all $n \geqslant 1$. Adding (\ref{237}) and (\ref{009}), we obtain
\begin{eqnarray} 
  \int_{\Omega}\Big( \frac{\nu}{p^+}-1 \Big) |\nabla u_n(x)|^{p(x)} dx -\Big(\lambda_- +\frac{\lambda_+ \nu}{p^-}\Big)\int_{\Omega} |u_n(x)|^{p(x)} dx \nonumber\\
-\int_{\Omega} \nu j(x,u_n(x)+j^0(x,u_n(x);-u_n(x)) dx \leqslant \varepsilon_n+\nu M.\nonumber
\end{eqnarray}

  From (\ref{ss}) we know that the sequence $\{u_n\}_{n \geq 1} \subseteq L^{p(x)} (\Omega)$ is bounded and using the inequality (\ref{kk1}), we obtain
\[
  \Big( \frac{\nu}{p^+}-1 \Big) \int_{\Omega} |\nabla u_n(x)|^{p(x)} dx< K_5,
\]
  for some $K_5>0$ and all $n \geqslant 1$.\\

  Because $\frac{\nu}{p^+}-1>0$, so
\begin{equation} \label{oo2}
  \textrm{the sequence } \{ \nabla u_n\}_{n \geq 1} \subseteq L^{p(x)} (\Omega,\mathbb{R}^N) \textrm{ is bounded}.
\end{equation}

  From (\ref{ss}) and (\ref{oo2}), we have that
\begin{equation}
\textrm{the sequence } \{u_n\}_{n \geq 1} \subseteq W^{1,p(x)} (\Omega) \textrm{ is bounded}.
\end{equation}

The rest of proof is similar as the proof of Lemma \ref{PS}.
\end{proof}

\begin{lemma}\label{lemma5}
 If hypothesis $H(j)$ hold and $\lambda<\frac{p^-}{p^+}\lambda_*$,
  then there exist $\beta_1, \beta_2 >0$ such that for all
  $u \in W_0^{1,p(x)}(\Omega)$ with $\|u\|<1$, we have
\[
  R(u) \geqslant \beta_1 \|u\|^{p^+} - \beta_2 \|u\|^\theta,
\]
  with $p^+<\theta \leqslant \widehat{p}^{*}:=\frac{Np^-}{N-p^-}$.
\end{lemma}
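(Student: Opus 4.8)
The plan is to bound the three terms of $R$ separately, showing that the ``principal part'' (the gradient term minus the $\lambda$-term) dominates a positive multiple of $\|u\|^{p^+}$, while the potential term is controlled from above by a multiple of $\|u\|^\theta$ for a suitable $\theta>p^+$. Write $P(u):=\int_\Omega \frac{1}{p(x)}|\nabla u|^{p(x)}\,dx-\int_\Omega\frac{\lambda}{p(x)}|u|^{p(x)}\,dx$. Using $\frac{1}{p^+}\le\frac{1}{p(x)}\le\frac{1}{p^-}$, I would split on the sign of $\lambda$. If $\lambda\le0$ the second integral is nonnegative and may be discarded, leaving $P(u)\ge\frac{1}{p^+}\int_\Omega|\nabla u|^{p(x)}\,dx$. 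If $\lambda>0$, I would bound $-\int_\Omega\frac{\lambda}{p(x)}|u|^{p(x)}\,dx\ge-\frac{\lambda}{p^-}\int_\Omega|u|^{p(x)}\,dx$ and then apply the definition (\ref{lambd}) of $\lambda_*$, i.e. $\lambda_*\int_\Omega|u|^{p(x)}\,dx\le\int_\Omega|\nabla u|^{p(x)}\,dx$, to obtain $P(u)\ge c_0\int_\Omega|\nabla u|^{p(x)}\,dx$ with $c_0:=\frac{1}{p^+}-\frac{\lambda_+}{p^-\lambda_*}$, where $\lambda_+:=\max\{\lambda,0\}$. The standing assumption $\lambda<\frac{p^-}{p^+}\lambda_*$ is exactly what makes $c_0>0$; note that if $\lambda_*=0$ this assumption forces $\lambda<0$, so the division by $\lambda_*$ only ever occurs when $\lambda>0$, in which case $\lambda_*>\frac{p^+}{p^-}\lambda>0$.

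Next I would convert $\int_\Omega|\nabla u|^{p(x)}\,dx$ into $\|u\|^{p^+}$ using $\|u\|<1$. By the definition of the norm, $\|\nabla u\|_{p(x)}\le\|u\|<1$, so Lemma \ref{lemma2}(d) applied to $\nabla u$ gives $\int_\Omega|\nabla u|^{p(x)}\,dx\ge\|\nabla u\|_{p(x)}^{p^+}$. The Poincar\'e inequality (Lemma \ref{fan}(b)) yields $\|u\|=\|u\|_{p(x)}+\|\nabla u\|_{p(x)}\le(c+1)\|\nabla u\|_{p(x)}$, hence $\|\nabla u\|_{p(x)}\ge(c+1)^{-1}\|u\|$. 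Chaining these estimates gives $P(u)\ge\beta_1\|u\|^{p^+}$ with $\beta_1:=c_0(c+1)^{-p^+}>0$.

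Finally I would estimate the potential from above. Hypothesis $H(j)(iv)$ furnishes $\delta>0$ with $j(x,t)\le-\tfrac{\mu}{2}|t|^{p(x)}\le0$ for a.a. $x\in\Omega$ and $0<|t|\le\delta$, while the growth condition $H(j)(iii)$ together with the Lebourg mean value theorem gives $|j(x,t)|\le a(x)|t|+c_1|t|^{r(x)}$ for a.a. $x$ and all $t$. I would then fix $\theta$ with $r^+<\theta\le\widehat{p}^{*}$, which is possible because $p^+\le r^+<\widehat{p}^{*}$, for instance $\theta=\widehat{p}^{*}$; then $\theta>r^+\ge p^+$. On $\{|u|\le\delta\}$ the integrand $j(x,u)$ is nonpositive, and on $\{|u|>\delta\}$, since $1\le r(x)\le r^+<\theta$, both $|u|$ and $|u|^{r(x)}$ are bounded by a constant multiple of $|u|^\theta$ (absorbing the factors $\delta^{s-\theta}$ with $s\in\{1,r(x)\}$). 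Hence $\int_\Omega j(x,u)\,dx\le C\int_\Omega|u|^\theta\,dx$. Since $\theta\le\widehat{p}^{*}=\frac{Np^-}{N-p^-}\le\frac{Np(x)}{N-p(x)}=p^*(x)$, the continuous embedding $W_0^{1,p(x)}(\Omega)\hookrightarrow L^\theta(\Omega)$ of Lemma \ref{fan}(a) gives $\int_\Omega|u|^\theta\,dx=\|u\|_{L^\theta}^\theta\le\beta_2\|u\|^\theta$ for some $\beta_2>0$. Subtracting this from the bound $P(u)\ge\beta_1\|u\|^{p^+}$ yields $R(u)\ge\beta_1\|u\|^{p^+}-\beta_2\|u\|^\theta$, as claimed.

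The delicate point is the potential estimate: the linear term $a(x)|t|$ coming from $H(j)(iii)$ carries a power below $p^+$ and would, near $t=0$, destroy the desired inequality; it is precisely $H(j)(iv)$, forcing $j(x,\cdot)$ to be nonpositive in a neighbourhood of the origin, that allows me to drop the region $\{|u|\le\delta\}$ and absorb all remaining powers into a single term $|u|^\theta$ with $\theta>p^+$. The only other step requiring care is verifying the strict positivity of the coefficient $c_0$, which is guaranteed exactly by the hypothesis $\lambda<\frac{p^-}{p^+}\lambda_*$.
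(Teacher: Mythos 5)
Your proof is correct, and its overall skeleton matches the paper's: split on the sign of $\lambda$, use the definition (\ref{lambd}) of $\lambda_*$ when $\lambda>0$ (noting, as you do, that $\lambda_*=0$ forces $\lambda\leqslant 0$ under the standing hypothesis, so the division is legitimate --- a point the paper leaves implicit), control the potential through $H(j)(iii)$ via Lebourg's theorem together with $H(j)(iv)$, and finish with the continuous embedding $W_0^{1,p(x)}(\Omega)\hookrightarrow L^{\theta}(\Omega)$ from Lemma \ref{fan}(a). The one genuine difference is how the coercive term $\beta_1\|u\|^{p^+}$ is produced. The paper keeps the quantitative estimate $j(x,t)\leqslant -\frac{\mu}{2}|t|^{p(x)}+\gamma|t|^{\theta}$ for \emph{all} $t$, so that $R(u)$ dominates $\beta_1\big(\int_{\Omega}|\nabla u(x)|^{p(x)}dx+\int_{\Omega}|u(x)|^{p(x)}dx\big)=\beta_1\Phi(u)$ with $\beta_1=\min\{\frac{1}{p^+}-\frac{\lambda}{\lambda_*p^-},\frac{\mu}{2}\}$ (or $\min\{\frac{1}{p^+},\frac{\mu}{2}\}$ when $\lambda\leqslant 0$), and then applies Lemma \ref{lemma6}(d) to the full modular $\Phi$. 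You instead discard the $\mu$-term, using $H(j)(iv)$ only for the sign information $j(x,t)\leqslant 0$ on $\{|t|\leqslant\delta\}$, and recover $\|u\|^{p^+}$ from the gradient term alone via Poincar\'e (Lemma \ref{fan}(b)) combined with Lemma \ref{lemma2}(d) applied to $\nabla u$. Both routes are valid: yours shows the lemma needs only nonpositivity of $j$ near the origin (and gives a $\beta_1$ independent of $\mu$), while the paper's stays entirely within the modular inequalities of Lemma \ref{lemma6} and never invokes the Poincar\'e constant. You are also more careful than the paper on one small point: the absorption of $a(x)|t|+c_1|t|^{r(x)}$ into $\gamma|t|^{\theta}$ on $\{|t|>\delta\}$ requires $\theta\geqslant r^+$, which your choice $\theta\in(r^+,\widehat{p}^*]$ makes explicit, whereas the paper only records $p^+<\theta<\widehat{p}^*$.
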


\begin{proof}
  From hypothesis $H(j)(iv)$, we can find $\delta >0$,
  such that for almost all $x \in \Omega$ and all $t$ such that $|t|\leqslant \delta$, we have
\[
  j(x,t) \leqslant \frac{-\mu}{2}|t|^{p(x)}.
\]
  On the other hand, from hypothesis $H(j)(iii)$, we know
  that for almost all $x \in \Omega$ and all $t$ such that $|t|>\delta$, we have
\[
 |j(x,t)| \leq a_1|t|+c_1|t|^{r(x)},
\]
  for some $a_1, c_1>0$.
  Thus for almost all $x \in \Omega$ and all $t \in \mathbb{R}$ we have
\begin{equation}\label{porow}
  j(x,t) \leqslant\frac{-\mu}{2}|t|^{p(x)}+\gamma |t|^\theta,
\end{equation}
  with some $\gamma>0$ and $p^+<\theta<\widehat{p}^*$.

Let us consider two cases.

\bigskip

\noindent \textit{Case 1. }
Let $\lambda \leqslant 0$. 

By using (\ref{porow}), we obtain that
\begin{eqnarray*}
   R(u)&=&\int_{\Omega} \frac{1}{p(x)} |\nabla u (x)|^{p(x)} dx-\int_{\Omega} \frac{\lambda}{p(x)}|u (x)|^{p(x)}dx - \int_{\Omega} j(x,u (x))dx\\
  &\geqslant& \int_{\Omega} \frac{1}{p^+} |\nabla u (x)|^{p(x)} dx + \int_{\Omega} \frac{\mu}{2}|u(x)|^{p(x)} dx - \gamma \int_{\Omega}|u(x)|^\theta dx.
\end{eqnarray*}

So, we have
\[
R(u) \geqslant \beta_1 \Big[\int_{\Omega}|\nabla u(x)|^{p(x)}dx +\int_{\Omega} |u(x)|^{p(x)} dx\Big]-\gamma\|u\|^\theta_\theta,
\]
  where $\beta_1:=\min\{ \frac{1}{p^+},\frac{\mu}{2} \}.$

\bigskip

\noindent \textit{Case 2. }
Let $\lambda >0$.

Using (\ref{porow}) and (\ref{lambdaa}), we obtain that

\begin{eqnarray*}
   R(u)&=&\int_{\Omega} \frac{1}{p(x)} |\nabla u (x)|^{p(x)} dx-\int_{\Omega} \frac{\lambda}{p(x)}|u (x)|^{p(x)}dx - \int_{\Omega} j(x,u (x))dx\\
  &\geqslant& \int_{\Omega} \frac{1}{p^+} |\nabla u (x)|^{p(x)} dx-\int_{\Omega} \frac{\lambda}{p^-}|u(x)|^{p(x)}dx\\
  &&\hspace{2.7cm}+ \int_{\Omega} \frac{\mu}{2}|u(x)|^{p(x)} dx - \gamma \int_{\Omega}|u(x)|^\theta dx\\
  &\geqslant& \frac{1}{p^+} \int_{\Omega} |\nabla u(x)|^{p(x)} dx + \frac{\mu}{2} \int_{\Omega} |u(x)|^{p(x)}dx\\
 &&\hspace{2.7cm} -  \frac{\lambda}{\lambda_*p^-}\int_{\Omega} |\nabla u(x)|^{p(x)} dx -\gamma \|u\|^\theta_\theta\\
 &=& \Big(\frac{1}{p^+}-\frac{\lambda}{\lambda_*p^-}\Big)\int_{\Omega} |\nabla u (x)|^{p(x)} dx + \frac{\mu}{2} \int_{\Omega} | u(x)|^{p(x)}dx-\gamma \|u\|^\theta_\theta.
\end{eqnarray*}
  From the hypotheses, we have
\[
 \frac{1}{p^+}-\frac{\lambda}{\lambda_*p^-} > 0,
\]
  so
\[
  R(u)\geqslant \beta_1 \Big[\int_{\Omega}|\nabla u(x)|^{p(x)}dx +\int_{\Omega} |u(x)|^{p(x)} dx\Big]-\gamma\|u\|^\theta_\theta,
\]
  where $\beta_1:=\min\{ \frac{1}{p^+}-\frac{\lambda}{\lambda_*p^-},\frac{\mu}{2} \}.$

  As $\theta \leqslant p^*(x)=\frac{Np(x)}{N-p(x)}$, then $W^{1,p(x)}_0(\Omega)$
  is embedded continuously in $L^\theta(\Omega)$
  (see Lemma \ref{fan}(c)).
  So there exists $\varrho>0$ such that
\begin{equation}\label{48}
  \|u\|_\theta \leqslant \varrho\|u\| \qquad \textrm{for all } u \in W ^{1,p(x)}_0(\Omega).
\end{equation}
  Using (\ref{48}) and Lemma \ref{lemma6}(d), for all $u \in W ^{1,p(x)}_0(\Omega)$ with $\|u\|<1$, we have
\[
  R(u) \geqslant \beta_1 \|u\|^{p^+}- \beta_2\|u\|^\theta,
\]
  where $\beta_2=\gamma {\varrho}^{\theta}$.
\end{proof}
 
\noindent Using Lemmata \ref{PS}, \ref{PS2} and \ref{lemma5}, we can prove the following existence theorem for problem (\ref{eq_01}).

\begin{theorem} \label{big2}
If hypotheses $H(j)$ and $H(j)_1$ hold, $\lambda<\widetilde{p}\lambda_*$ (see (\ref{dodatkowy})), then problem (\ref{eq_01}) has a nontrival solution.
\end{theorem}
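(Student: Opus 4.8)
The plan is to realize the nontrivial solution as a nonzero critical point of the energy functional $R$, produced by the nonsmooth mountain pass theorem (Theorem \ref{twierdzenie}). The key preliminary observation is that the single hypothesis $\lambda<\widetilde{p}\lambda_*$ simultaneously unlocks both auxiliary lemmas: since $\widetilde{p}=\min\{\frac{(p^--1)p^+}{(p^+-1)p^-},\frac{p^-}{p^+}\}$, the bound $\lambda<\widetilde{p}\lambda_*$ forces both $\lambda<\frac{(p^--1)p^+}{(p^+-1)p^-}\lambda_*$ and $\lambda<\frac{p^-}{p^+}\lambda_*$. The first of these, together with $H(j)$ and $H(j)_1$, yields the nonsmooth PS-condition by Lemma \ref{PS}; the second, together with $H(j)$, gives the coercive lower bound of Lemma \ref{lemma5}. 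Since $X=W_0^{1,p(x)}(\Omega)$ is reflexive and $R$ is locally Lipschitz, it then remains only to verify the mountain pass geometry.

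For the geometry near the origin, I would first note that $R(0)=0$, which follows from $j(x,0)=0$. By Lemma \ref{lemma5} there are $\beta_1,\beta_2>0$ with $R(u)\geq\beta_1\|u\|^{p^+}-\beta_2\|u\|^{\theta}$ for $\|u\|<1$, where $\theta>p^+$. Hence for $\|u\|=\rho<1$ one has $R(u)\geq\rho^{p^+}(\beta_1-\beta_2\rho^{\theta-p^+})$, and since $\theta-p^+>0$ this is strictly positive once $\rho$ is small enough that $\beta_2\rho^{\theta-p^+}<\beta_1$. Fixing such a $\rho$ yields $\inf_{\|u\|=\rho}R(u)\geq\eta>0$.

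To produce the distinguished point $y$ with $\|y\|>\rho$ and $R(y)$ below $\eta$, I would invoke hypothesis $H(j)_1(vi)$, which supplies $\overline{u}\in W_0^{1,p(x)}(\Omega)\setminus\{0\}$ with $\frac{1}{p^-}\int_\Omega|\nabla\overline{u}|^{p(x)}dx+\frac{\lambda_-}{p^-}\int_\Omega|\overline{u}|^{p(x)}dx\leq\int_\Omega j(x,\overline{u})dx$. The claim is that $R(\overline{u})\leq0$, which I would verify by splitting on the sign of $\lambda$: when $\lambda\leq0$ one has $-\lambda/p(x)=\lambda_-/p(x)\leq\lambda_-/p^-$ together with $1/p(x)\leq1/p^-$, so $R(\overline{u})$ is dominated by the left-hand side of $(vi)$; when $\lambda>0$ one has $\lambda_-=0$ and the nonpositive term $-\int_\Omega\frac{\lambda}{p(x)}|\overline{u}|^{p(x)}dx$ is simply discarded, again bounding $R(\overline{u})$ by the left side of $(vi)$. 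In both cases $(vi)$ gives $R(\overline{u})\leq0<\eta$. Because $\overline{u}\neq0$, I may shrink $\rho$ further if necessary so that $\rho<\|\overline{u}\|$ (this is compatible with the smallness already imposed above), and then set $y=\overline{u}$, so that $\|y\|>\rho$ and $\max\{R(0),R(y)\}=0<\eta$.

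With all hypotheses of Theorem \ref{twierdzenie} in place, it produces a critical point $u$ of $R$ with $R(u)\geq\eta>0$; since $R(0)=0$, this $u$ is nontrivial. The final step is to read off that a critical point solves (\ref{eq_01}): from $0\in\partial R(u)$ and the calculus of the Clarke subdifferential (exactly as already used in (\ref{11ss})), one obtains $0\in Au-\lambda|u|^{p(x)-2}u-\partial\psi(u)$, with $A$ the operator (\ref{operator}) representing $-\Delta_{p(x)}$ and $\partial\psi(u)\subseteq\partial j(\cdot,u(\cdot))$, whence $-\Delta_{p(x)}u-\lambda|u|^{p(x)-2}u\in\partial j(x,u(x))$ a.e.\ on $\Omega$ with $u=0$ on $\partial\Omega$. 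I expect the main obstacle to lie in the geometry rather than in the PS-condition: specifically, the careful sign analysis that converts $(vi)$ into $R(\overline{u})\leq0$, and the bookkeeping ensuring that a single $\rho$ can be chosen small enough to make $\eta>0$, to stay inside the regime $\|u\|<1$ required by Lemma \ref{lemma5}, and to satisfy $\rho<\|\overline{u}\|$ at once.
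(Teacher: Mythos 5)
Your proposal is correct and follows essentially the same route as the paper: Lemma \ref{lemma5} for the mountain-pass geometry near the origin, hypothesis $H(j)_1(vi)$ to get $R(\overline{u})\leqslant 0$, Lemma \ref{PS} for the nonsmooth PS-condition, and Theorem \ref{twierdzenie} followed by the Clarke-subdifferential calculus to read off a nontrivial solution of (\ref{eq_01}). Your explicit sign-splitting on $\lambda$, the observation that $\lambda<\widetilde{p}\lambda_*$ unlocks both auxiliary lemmas at once, and the bookkeeping $\rho<\|\overline{u}\|$ are details the paper leaves implicit, but they are the same argument.
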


\begin{proof}
  From Lemma \ref{lemma5} we know that there exist $\beta_1,\beta_2>0$, such that for all $u\in W_0^{1,p(x)} (\Omega)$ with $||u||<1$, we have
\[
  R(u) \geq \beta_1 ||u||^{p^+}- \beta_2||u||^\theta = \beta_1||u||^{p^+} \Big( 1-\frac{\beta_2}{\beta_1}||u||^{\theta - p^+}\Big).
\]
  Since $p^+ <\theta$, if we choose $\rho \in (0, 1)$ small enough, we will have that $R(u)\geq L>0$, for all $u \in W_0^{1,p(x)}(\Omega)$, with $||u||=\rho$ and some $L>0$.\\

  Now, let $\overline{u} \in W^{1,p(x)}_0 (\Omega)$ be as in hypothesis $H(j)_1 (vi)$. We have
\begin{eqnarray*}
  R(\overline{u})&    =    &\int_{\Omega} \frac{1}{p(x)} |\nabla \overline{u} (x)|^{p(x)} dx-\int_{\Omega}
     \frac{\lambda}{p(x)}|\overline{u} (x)|^{p(x)}dx - \int_{\Omega} j(x,\overline{u} (x))dx\\
                 &\leqslant&\frac{1}{p^-} \int_{\Omega} |\nabla \overline{u} (x)|^{p(x)} dx +\frac{\lambda_-}{p^{-}}
     \int_{\Omega} |\overline{u}(x)|^{p(x)} dx-\int_{\Omega} j(x,\overline{u} (x))dx.\\
\end{eqnarray*}

  From hyphothesis $H(j)_1(vi)$, we get
  $R(\overline{u}) \leqslant 0$.
  This permits the use of Theorem \ref{twierdzenie}
  which gives us $u \in W_0^{1,p(x)}(\Omega)$ such that
  $R(u)>0 = R(0)$ and $0 \in \partial R(u)$.
  From the last inclusion we obtain
\[
  0=Au-\lambda |u|^{p(x)-2}u-v^*,
\]
  where $v^* \in \partial \psi(u).$ Hence
\[
  Au=\lambda |u|^{p(x)-2}u+v^*,
\]
  so for all $v \in \mathcal{C}_0^\infty (\Omega)$, we have $\langle Au,v \rangle = \lambda \langle |u|^{p(x)-2}u,v\rangle +\langle v^*,v \rangle$. 

  So we have
\begin{eqnarray*}
  &   & \int_{\Omega} |\nabla u(x)|^{p(x)-2}(\nabla u(x), \nabla v(x))_{\mathbb{R}^N}dx\cr
  & = & \int_{\Omega}\lambda |u(x)|^{p(x)-2}u(x)v(x)dx+\int_{\Omega}v^*(x)v(x) dx,
\end{eqnarray*}
  for all $v \in \mathcal{C}_0^\infty(\Omega)$.

  From the definition of the distributional derivative we have
\begin{equation}
  \left\{
  \begin{array}{lr}
    -\textrm{div} \big( |\nabla u(x)|^{p(x)-2} \nabla u(x) \big)=\lambda |u(x)|^{p(x)-2} u(x) +v(x)& \textrm{in } \Omega,\\
    u=0 & \textrm{on}\ \partial \Omega,
  \end{array}
  \right.
\end{equation}
  so
\begin{equation}
  \left\{
  \begin{array}{lr}
    -\Delta_{p(x)}u-\lambda |u(x)|^{p(x)-2} u(x)\in \partial j(x, u(x))& \textrm{in } \Omega,\\
    u=0 & \textrm{on}\ \partial \Omega.
  \end{array}
  \right.
\end{equation}
  Therefore $u \in W_0^{1,p(x)}(\Omega)$ is a nontrivial solution of (\ref{eq_01}).
\end{proof}

\begin{theorem} \label{big1}
If hypotheses $H(j)$ and $H(j)_2$ hold, $\lambda <\frac{p^-}{p^+} \lambda_*$, then problem (\ref{eq_01}) has a nontrival solution.
\end{theorem}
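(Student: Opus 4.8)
The plan is to apply the nonsmooth mountain pass theorem (Theorem \ref{twierdzenie}) to the functional $R$, in the same spirit as the proof of Theorem \ref{big2}; the only genuine differences lie in verifying the Palais--Smale condition and the ``far'' geometry under the alternative hypothesis $H(j)_2$, since here we no longer have the explicit comparison function supplied by $H(j)_1(vi)$. Two of the three ingredients are already in place. First, because $H(j)$ and $H(j)_2$ hold, Lemma \ref{PS2} gives that $R$ satisfies the nonsmooth PS-condition, with no restriction on $\lambda$. Second, the local geometry near the origin is identical to the $H(j)_1$ case: since $\lambda<\frac{p^-}{p^+}\lambda_*$ is exactly the hypothesis of Lemma \ref{lemma5}, that lemma yields $\beta_1,\beta_2>0$ with $R(u)\geqslant \beta_1\|u\|^{p^+}-\beta_2\|u\|^{\theta}$ for $\|u\|<1$ and $p^+<\theta\leqslant\widehat{p}^{*}$. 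As $\theta>p^+$, choosing $\rho\in(0,1)$ small enough forces $R(u)\geqslant L>0$ whenever $\|u\|=\rho$, while $R(0)=0$ follows from $j(x,0)=0$; hence $\inf_{\|u\|=\rho}R(u)=:\eta\geqslant L>0=R(0)$.

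The step that must replace $H(j)_1(vi)$, and which I expect to be the main obstacle, is the construction of a point $y$ with $\|y\|>\rho$ and $R(y)<\eta$. Here I would exploit the superlinear growth of $j$ furnished by Lemma \ref{lemmakop}(b): there exist $l,M>0$ with $j(x,t)\geqslant l|t|^{\nu}$ for $|t|>M$, where $\nu>p^+$. Fix any $v\in \mathcal{C}_0^\infty(\Omega)\setminus\{0\}$ and estimate $R(tv)$ for $t>1$. The two leading terms $\int_{\Omega}\frac{t^{p(x)}}{p(x)}|\nabla v|^{p(x)}dx$ and $-\lambda\int_{\Omega}\frac{t^{p(x)}}{p(x)}|v|^{p(x)}dx$ are bounded above by $Ct^{p^+}$ for some $C>0$, using $t^{p(x)}\leqslant t^{p^+}$ and $\lambda_-\geqslant 0$. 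For the potential term, Lemma \ref{lemmakop}(b) gives on $\{|tv|>M\}=\{|v|>M/t\}$ the bound $-\int_{\{|tv|>M\}} j(x,tv)\,dx\leqslant -lt^{\nu}\int_{\{|v|>M/t\}}|v|^{\nu}dx$; since $v\in\mathcal{C}_0^\infty(\Omega)$ this integral converges to $\|v\|_{\nu}^{\nu}>0$ and is therefore $\geqslant\tfrac12\|v\|_{\nu}^{\nu}$ for $t$ large, while the contribution from $\{|tv|\leqslant M\}$ stays bounded by $H(j)(iii)$ and the Lebourg mean value theorem. Combining these, $R(tv)\leqslant Ct^{p^+}+C'-\frac{l}{2}\|v\|_{\nu}^{\nu}t^{\nu}$, and as $\nu>p^+$ the right-hand side tends to $-\infty$. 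Choosing $t_0$ so large that $R(t_0v)<0$ and $\|t_0v\|>\rho$, and setting $y:=t_0v$, we obtain $\max\{R(0),R(y)\}=0<\eta$.

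With all hypotheses of Theorem \ref{twierdzenie} verified, it produces a critical point $u\in W_0^{1,p(x)}(\Omega)$ with $R(u)\geqslant\eta>0=R(0)$, so $u\neq 0$, and $0\in\partial R(u)$. From this inclusion the remainder of the argument is verbatim that of Theorem \ref{big2}: writing $0=Au-\lambda|u|^{p(x)-2}u-v^*$ with $v^*\in\partial\psi(u)$, testing against arbitrary $v\in\mathcal{C}_0^\infty(\Omega)$, and using the representation (\ref{operator}) of $A$ together with the fact that $v^*(x)\in\partial j(x,u(x))$, one recovers the distributional formulation of (\ref{eq_01}). Hence $u$ is a nontrivial solution of the problem.
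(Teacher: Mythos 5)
Your proposal is correct and follows essentially the same route as the paper: PS-condition from Lemma \ref{PS2}, the mountain-pass geometry near the origin from Lemma \ref{lemma5}, the ray estimate $R(tv)\rightarrow-\infty$ via the superlinear lower bound $j(x,t)\geqslant l|t|^{\nu}$ of Lemma \ref{lemmakop}(b), and then Theorem \ref{twierdzenie} followed by the same passage from $0\in\partial R(u)$ to the inclusion as in Theorem \ref{big2}. Your treatment of the ray is in fact slightly more explicit than the paper's (splitting $\Omega$ into $\{|tv|>M\}$ and $\{|tv|\leqslant M\}$ and tracking the constants), but the argument is the same in substance.
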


\begin{proof}
  From Lemma \ref{lemma5} we know that there exist $\beta_1,\beta_2>0$, such that for all $u\in W_0^{1,p(x)} (\Omega)$ with $||u||<1$, we have
\[
  R(u) \geq \beta_1 ||u||^{p^+}- \beta_2||u||^\theta = \beta_1||u||^{p^+} \Big( 1-\frac{\beta_2}{\beta_1}||u||^{\theta - p^+}\Big).
\]
  Since $p^+ <\theta$, if we choose $\rho \in (0, 1)$ small enough, we will have that $R(u)\geq L_1>0$, for all $u \in W_0^{1,p(x)}(\Omega)$, with $||u||=\rho$ and some $L_1>0$.\\
  Using Lemma \ref{lemmakop}(b), for any  $u \in W^{1,p(x)}_0 (\Omega) \backslash \{0\}$ we have
\begin{eqnarray*}
  R(tu)&=&\int_{\Omega} \frac{1}{p(x)} |\nabla t u (x)|^{p(x)} dx-\int_{\Omega} \frac{\lambda}{p(x)}|t u (x)|^{p(x)}dx - \int_{\Omega} j(x,t u(x))dx\\
&\leqslant&t^{p^+} \Big(\frac{1}{p^-} \int_{\Omega} | \nabla u (x)|^{p(x)} dx +\frac{\lambda_-}{p^{-}} \int_{\Omega} |u(x)|^{p(x)} dx \Big)-\int_{\Omega} j(x,t u(x))dx\\
&\leqslant&\overline{c} \cdot t^{p^+} \big(\int_{\Omega} (|\nabla u (x)|^{p(x)}+|u(x)|^{p(x)})dx\big) - l \cdot t^{\nu} \int_{\Omega} |u(x)|^{\nu}dx+L_2,
\end{eqnarray*}
  where $\lambda_-:=\max\{0, - \lambda\}$, $\overline{c}=\max\{ \frac{1}{p^-},\frac{\lambda_-}{p^-}\}$ and $\nu >p^+.$\\

  Because $\nu>p^+$, we get that
$R(t u) \rightarrow -\infty$ when $t \rightarrow \infty$. This permits the use of Theorem \ref{twierdzenie} which gives us $u \in W_0^{1,p(x)}(\Omega)$ such that $R(u)>0 = R(0)$ and $0 \in \partial R(u)$. \\

  Therefore $u \in W_0^{1,p(x)}(\Omega)$ is a nontrival solution of (\ref{eq_01}).
\end{proof}

\begin{remark}
  A nonsmooth potential satisfying hypothesis $H(j)$ and $H(j)_1$ is for example the one given by the following function:
\[
  j_1(x,t)= 
   \left\{
     \begin{array}{lcc}
      -\mu|t|^{p(x)} & \textrm{if} & |t| \leqslant 1,\\
      (\mu+\sigma -|2|^{q^+})|t|-2 \mu-\sigma+|2|^{q^+} & \textrm{if} & 1<|t|\leqslant 2,\\
      \sigma-|t|^{q^+} & \textrm{if} & |t| > 2,\\
     \end{array}
    \right.
\]
  with $\mu, \sigma>0$ and continuous function $p:\overline{\Omega} \rightarrow \mathbb{R}$ which satisfies  $1<p^- \leqslant p(x) \leqslant p^+<q^- \leqslant q(x) \leqslant q^+<\infty $
 and $ p^+\leqslant \widehat{p}^*.$\\

  A nonsmooth potential satisfying hypothesis $H(j)$ and $H(j)_2$ is for example the one given by the following function:
\[
  j_2(x,t)= 
   \left\{
     \begin{array}{lcc}
      -\mu|t|^{p(x)} & \textrm{if} & |t| \leqslant 1,\\
      |t|^{q^+}-\mu-1 & \textrm{if} & |t| > 1,\\
     \end{array}
    \right.
\]
  with $\mu>0$ and continuous functions $p, q:\overline{\Omega} \rightarrow \mathbb{R}$ which satisfies $1<p^- \leqslant p(x) \leqslant p^+<q^- \leqslant q(x) \leqslant q^+<\infty $
 and $ p^+\leqslant \widehat{p}^*.$\\

Of course $j_1$ does not satisfy $H(j)_2$ and $j_2$ does not satisfy $H(j)_1$ (see Remark \ref{uwaga}).
\end{remark}

\end{document}